\numberwithin{equation}{section}
\theoremstyle{plain}
\newtheorem{theorem}{Theorem}[section]
\newtheorem{proposition}[theorem]{Proposition}
\newtheorem{lemma}[theorem]{Lemma}
\newtheorem{corollary}[theorem]{Corollary}
\newtheorem{remark}[theorem]{Remark}
\newcommand\blfootnote[1]{%
  \begingroup
  \renewcommand\thefootnote{}\footnote{#1}%
  \addtocounter{footnote}{-1}%
  \endgroup
}
\def\dd{\hskip 1pt{\rm{d}}}
\def\E{\mathop{\rm E\,}\nolimits}
\def\Var{\mathop{\rm Var\,}\nolimits}
\def\Cov{{\mathop{\rm Cov}}}
\def\bR{\mathbb{R}}
\def\steq{\buildrel {\mathcal{L}} \over =}
\begin{document}

\title{\textbf{Shrinkage Estimation for the Diagonal Multivariate Natural Exponential Families}}

\author{Nikolas Siapoutis\footnote{Department of Statistics, Pennsylvania State University, University Park, PA 16802, U.S.A.  E-mail address: \href{mailto:nzs30@psu.edu}{nzs30@psu.edu}.} , Donald Richards\footnote{Department of Statistics, Pennsylvania State University, University Park, PA 16802, U.S.A. E-mail address: \href{mailto:richards@stat.psu.edu}{richards@stat.psu.edu}.} , and Bharath K.~Sriperumbudur\footnote{Department of Statistics, Pennsylvania State University, University Park, PA 16802, U.S.A.  E-mail address: \href{mailto:bks18@psu.edu}{bks18@psu.edu}.}\vspace{0.2cm} \\ 
 } 

\date{\today}

\maketitle

\vspace{-15pt}
\begin{abstract}
We study shrinkage estimation of the mean parameters of a class of multivariate distributions for which the diagonal entries of the corresponding covariance matrix are certain quadratic functions of the mean parameter. This class of distributions includes the diagonal multivariate natural exponential families.  We propose two classes of semi-parametric shrinkage estimators for the mean and construct unbiased estimators of the corresponding risk.  We establish the asymptotic consistency and convergence rates for these shrinkage estimators under squared error loss as both $n$, the sample size, and $p$, the dimension, tend to infinity. Next, we specialize these results to the diagonal multivariate natural exponential families, which have been classified as consisting of the normal, Poisson, gamma, multinomial, negative multinomial, and hybrid classes of distributions.  We establish the consistency of our estimators in the normal, gamma, and negative multinomial cases subject to the condition that $p n^{-1/3} (\log{n})^{4/3} \to 0$, and in the Poisson and multinomial cases if $p n^{-1/2} \to 0$, as $n,p \to \infty$. Simulation studies are provided to evaluate the performance of our estimators and we illustrate that, in the gamma and Poisson cases, our estimators achieve lower risk than the maximum likelihood estimator, thereby demonstrating the superiority of our estimators over the maximum likelihood estimator.

\blfootnote{\emph{MSC 2010 subject classifications:} Primary 62F12, 62H05; Secondary 62J07, 62G05} 
\blfootnote{\emph{Key words and phrases:} Asymptotic consistency; diagonal multivariate natural exponential families; high-dimensional inference; semi-parametric estimator; unbiased estimate of risk.}

\end{abstract}


\section{Introduction}

Shrinkage estimators have been studied widely in statistics and have profound impact in many applications. \cite{stein1956} proved that in the case of the multivariate normal distribution on  $p$-dimensional Euclidean space, $\mathbb{R}^p$, with $p\geq 3$, there exist estimators of the population mean that dominate the sample mean under squared error loss. Subsequently, an explicit formula for such an estimator was given by \cite{james1961}. These results gave rise to {\it shrinkage estimation}, a novel approach to improved estimation of the mean, and to a broad shrinkage estimation literature for many distributions, inferential problems, and loss functions.  We refer to the monographs of \cite{MR606011}, \cite{MR1051420}, and \cite{MR0483199} for many aspects of the literature.  

In this paper, we develop shrinkage estimation for the diagonal multivariate natural exponential families of probability distributions, which were defined by \cite{bar1994diagonal}. These families include the multivariate normal, Poisson, gamma, multinomial, negative multinomial, and hybrid distributions. Later work on the diagonal multivariate natural exponential families has involved new properties and applications; see \cite{koudou1998lancaster}, \cite{bernardoff2006which}, \cite{letac2008laplace}, \cite{chatelain2006parameter},  \cite{chatelain2007bivariate}, \cite{MR3475937}. 
 
Motivated by burgeoning applications of the diagonal multivariate natural exponential families, we study simultaneous estimation of the mean parameters of random observations from those families. More broadly, we study distributions for which the diagonal entries of the covariance matrix are certain quadratic functions of the mean parameters.  We propose two classes of semi-parametric shrinkage estimators for the mean vector and, working with the squared error loss function, we also construct unbiased estimators of the corresponding risk.  

Further, we establish the asymptotic consistency and convergence rates for these shrinkage estimators under squared error loss as both $n$, the sample size, and $p$, the dimension, tend to infinity. Our results simultaneously provide shrinkage estimators for the case in which $p$ is fixed and $n \to \infty$, and also for the high-dimensional case in which both $p$ and $n$ tend to infinity.  In specializing these results to the diagonal multivariate natural exponential families, we establish consistency of our shrinkage estimators in the case of the normal, gamma and negative multinomial distributions under the condition that $p \cdot n^{-1/3}\log^{4/3}{n} \to 0$ as $n,p \to \infty$.  Further, for the Poisson and multinomial distributions, we derive consistency of our estimators subject to the condition $p \cdot n^{-1/2} \to 0$ as $n,p \to \infty$. 

The approach that we use to develop our results is motivated by the work of \cite{muandet2016kernel}, who derived shrinkage estimators which improved on the standard empirical averages estimators of kernel mean functions in reproducing kernel Hilbert spaces; \cite{xie2016optimal}, who developed shrinkage estimation for the one-dimensional natural exponential families with quadratic variance functions; and \cite{siapoutis2019}, who constructed shrinkage estimators for mean functions in general Hilbert spaces. 

In addition to our theoretical investigations, we carry out extensive simulation studies of the new shrinkage estimators and we determine that the new estimators outperform the maximum likelihood estimator. Specifically, we deduce from the simulations that, in the case of the gamma and Poisson families, our estimators achieve lower risk than the maximum likelihood estimator. In order to conduct these simulation studies, we first derive the probability density functions of these classes of distributions, a result which was unknown hitherto. Then we carry out the simulations using variable-at-a-time Metropolis algorithms for simulating observations from a multivariate gamma distribution and by means of a multivariate reduction scheme for the multivariate Poisson distribution.

The article is organized as follows. We begin by presenting in Section \ref{sec_dmnef} the basic definitions for the multivariate natural exponential families and the diagonal multivariate natural exponential families. In Section \ref{sec_shrink_to_location}, we construct a large class of semi-parametric shrinkage estimators for the mean parameter for the distributions for which the diagonal entries of the covariance matrix are certain quadratic functions of the mean parameter that shrink toward a given location and show their asymptotic properties under squared error loss. In Section \ref{sec_shrink_to_mean}, we construct another class of semi-parametric shrinkage estimators that shrinks toward the grand mean and study their properties. In Section \ref{sec_mnef}, we study the distributions that belong to the diagonal multivariate natural exponential families and establish their asymptotic consistency. Finally, simulation studies are presented in Section \ref{sec_ss}.


\section{The diagonal multivariate natural exponential \break families}
\label{sec_dmnef}

We provide some standard definitions for the multivariate natural exponential families. These definitions are also provided by  \cite{barndorffnielsen}, \cite{chentsov1982statistical}, \cite{morris1982natural}, \cite{brown1986fundamentals}, \cite{jvarphirgensen1987exponential}, \cite{letac1989problem},  and \cite{casalis1990families}

Suppose that $p > 1$, $\eta=(\eta_1,\dots, \eta_p) \in \mathbb{R}^p$, $x=(x_1, \dots, x_p) \in \mathbb{R}^p$, and $\mu$ is a positive measure on $\mathbb{R}^p$. Let $\langle \eta, x \rangle$ denote the inner product of $\eta$ and $x$, which is given by $\langle \eta, x \rangle=\eta^{\top} x= \sum_{j=1}^p \eta_j x_j$. Define the {\it Laplace transform of $\mu$ }as 
\begin{equation}
\label{Laplace_transform}
    L_{\mu}(\eta):=\int_{\mathbb{R}^p} \exp(\langle \eta, x \rangle) \mu(dx).
\end{equation}
Letting $H(\mu) = \{\eta \in \mathbb{R}^p: L_{\mu}(\eta)<\infty\}$, we denote by $\text{Int}( H(\mu))$ the interior of $H(\mu)$.

Let $\mathcal{M}_p$ be the set of all $\mu$ that are not concentrated on a strict affine subspace of $\mathbb{R}^p$ with $\text{Int}( H(\mu))$ being non-empty. If $\mu \in \mathcal{M}_p$ and $\eta \in \text{Int}( H(\mu))$, then 
$$
P(\eta,\mu)(dx)= (L_{\mu}(\eta))^{-1} \exp(\langle \eta, x \rangle) \mu(dx),
$$
$x \in \mathbb{R}^p$, is a probability measure. The family of distributions $F(\mu)= \{P(\eta,\mu)(dx): \eta \in \text{Int}( H(\mu))\}$ is called the {\it natural exponential family generated by} $\mu$. 

We define the {\it cumulant-generating function} (c.g.f.) of the measure $\mu$ by $k_{\mu}(\eta) = \log L_{\mu}(\eta),$
$\eta \in \text{Int}( H(\mu))$.  The {\it mean function} of the natural exponential family $F$ is
$$
m := (m_1,\dots, m_p)=\int_{\mathbb{R}^p} x P(\eta, \mu) (dx).
$$
It is a consequence of \eqref{Laplace_transform} that 
$$
m = \nabla k_{\mu}(\eta) \bigg|_{\eta=0}= \Bigg(\frac{\partial k_{\mu}(\eta)}{\partial \eta_1}, \dots, \frac{\partial k_{\mu}(\eta)}{\partial \eta_p}\Bigg) \Bigg|_{\eta=0}.
$$
Further, the {\it covariance matrix} of the natural exponential family $F$ is defined by
$$
    \Cov(m) := \int_{\mathbb{R}^p} (x-m)^\top(x-m) P(\eta, \mu) (dx).
$$
Again by \eqref{Laplace_transform}, we have 

$$
\Cov(m) = \Bigg( \frac{\partial^2 k_{\mu}(\eta)}{\partial \eta_i \partial \eta_j}: i,j = 1,\ldots, p\Bigg) \Bigg|_{\eta=0}.
$$

A natural exponential family $F$ in $\mathbb{R}^p$ is said to be {\it diagonal} if there exists functions $\alpha_j: \mathbb{R} \to \mathbb{R}$, $j=1,\ldots,p$, such that the diagonal of the matrix $\Cov(m)$ is of the form 
$$
\text{diag } \big( \Cov(m) \big)= \big(\alpha_1(m_1), \dots ,\alpha_p(m_p)\big).
$$
The family $F$ is also said to be {\it irreducible} if it is not the product of two independent natural exponential families in $\mathbb{R}^k$ and $\mathbb{R}^{p-k}$, for some $k=1,\dots, p-1$.  

For an irreducible, diagonal, natural exponential family $F$, \cite{bar1994diagonal} showed that there are only six such families in $\mathbb{R}^p$ and for these families each function $\alpha_j$ is a quadratic polynomial in $m_j$, i.e., a polynomial of degree at most two. These families are the familiar multivariate normal, Poisson, gamma, multinomial, and negative multinomial distributions, and an additional exceptional family called the hybrid distributions. 
In this article, we focus on the first five most common and well-established distributions in the literature.


\section{Shrinkage estimation toward a given location}
\label{sec_shrink_to_location}

Let $\Theta \subseteq \mathbb{R}$ be the space of all possible values of our parameters.  For $i=1,\ldots,n$, let $Y_i=(Y_{i1},\ldots,Y_{ip}) \in \mathbb{R}^p$ be a random vector, with distribution function $F_i$ having finite mean and covariance matrix.  We suppose that $Y_1, \ldots,Y_n$ are mutually independent random vectors, we denote the mean of each $Y_i$ by $E(Y_i) := \theta_i =(\theta_{i1},\ldots,\theta_{ip})$, where the unknown parameters $\theta_{ij} \in \Theta$ for all $i=1,\ldots,n$ and $j = 1,\ldots,p$.  Further, we denote by $\Cov(Y_i)$ the covariance matrix of $Y_i$, $i=1,\ldots,n$.  

For known constants $\nu_0, \nu_1, \nu_2 \in \mathbb{R}$, define 
\begin{equation}
\label{V_polynomial}
V(t) = \nu_0+\nu_1 t+\nu_2 t^2,
\end{equation}
$t \in \Theta$.  We assume that $\nu_0, \nu_1, \nu_2$ are such that $V(t) > 0$ for all $t \in \Theta$.  Motivated by the structure of the covariance matrices for the five most common irreducible diagonal natural exponential families, we further assume that, for each $i=1,\ldots,n$, the diagonal entries of the covariance matrix of the distribution $F_i$ are of the form 
$$
\hbox{diag}\big(\Cov(Y_i)\big) := \big(\Var(Y_{i1}),\ldots,\Var(Y_{ip})\big)
= \bigg(\frac{V(\theta_{i1})}{\tau_{i1}},\ldots,\frac{V(\theta_{ip})}{\tau_{ip}} \bigg),
$$
where the known constants $\tau_{ij}  \in \mathbb{N}$.  

In this article, we consider two classes of semi-parametric shrinkage estimators for the mean parameters $\theta_i$, $i=1,\ldots, n$.  One class of estimators will shrink $Y_{ij}$ toward a given location $\mu_j \in \mathbb{R}$, $j=1,\ldots, p$, and the second class of estimators will provide shrinkage toward the mean vector $\bar{Y}_j = n^{-1} \sum_{i=1}^{n} Y_{ij}$, for $j=1,\ldots, p$.  

Let $b=(b_1, \ldots ,b_n)$ where $b_i \in [0,1]$, $i=1,\ldots,n$.  Also, let $\mu=(\mu_1,\ldots,\mu_p) \in \mathbb{R}^p$.  In this section, we consider shrinkage estimators of the form
\begin{equation}
\label{estimator_mu}
\hat{\theta}_{ij}^{b,\mu}=(1-b_i)Y_{ij}+b_i\mu_j,   
\end{equation}
$i=1,\ldots,n, \; j=1,\ldots p$, that shrink $Y_{ij}$ toward a given location $\mu_j \in \mathbb{R}$. 
We also require that $|\mu_j|\leq \max\{|Y_{il}|: i= 1,\ldots,n, l=1,\ldots,p\}$ for all $j=1,\ldots,p$, so that shrinkage will take place toward a vector $\mu_j$ that is within the range of the data. 

The squared error loss of the estimators in \eqref{estimator_mu} is
\begin{align}
\label{loss_1}
\ell_{b,\mu}= \frac{1}{np} \sum_{i=1}^{n} \sum_{j=1}^{p} (\hat{\theta}_{ij}^{b,\mu}-\theta_{ij})^2,
\end{align}
and we define its risk as the expected value of the loss function given by
\begin{align}
\label{risk_1}
R_{b,\mu}=E(\ell_{b,\mu}).
\end{align}

We wish to find an optimal choice of $b$ and $\mu$ that minimizes the risk \eqref{risk_1}. However, this is not feasible since the risk depends on the unknown parameters $\theta_{ij}$. Similar to \cite{xie2016optimal}, we propose and minimize an unbiased estimator of its risk, given in Proposition \ref{proposition_unbiased}.

\begin{proposition}
\label{proposition_unbiased}
An unbiased estimator of the risk $R_{b,\mu}$ in \eqref{risk_1} is given by
\begin{align}
\label{URE}
\hat{R}_{b,\mu}= \frac{1}{np} \sum_{i=1}^{n} \sum_{j=1}^{p} \bigg[b_i^2(Y_{ij} - \mu_j)^2 + (1-2b_i)\frac{V(Y_{ij})}{\tau_{ij}+\nu_2}\bigg].
\end{align}
\end{proposition}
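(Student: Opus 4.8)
The plan is to evaluate both $R_{b,\mu}$ and $E(\hat{R}_{b,\mu})$ in closed form as functions of the unknown parameters $\theta_{ij}$ and to verify that the two expressions agree term by term. Since $Y_1,\ldots,Y_n$ are independent and each summand in the loss \eqref{loss_1} depends on a single coordinate $Y_{ij}$, it suffices to match the contributions for a fixed pair $(i,j)$; the conclusion then follows by summing and dividing by $np$.

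First I would compute the risk. Writing $\hat{\theta}_{ij}^{b,\mu}-\theta_{ij}=(1-b_i)(Y_{ij}-\theta_{ij})+b_i(\mu_j-\theta_{ij})$ and using $E(Y_{ij}-\theta_{ij})=0$ together with $\Var(Y_{ij})=V(\theta_{ij})/\tau_{ij}$, the cross term vanishes and the bias--variance decomposition gives
$$E\big[(\hat{\theta}_{ij}^{b,\mu}-\theta_{ij})^2\big]=(1-b_i)^2\frac{V(\theta_{ij})}{\tau_{ij}}+b_i^2(\mu_j-\theta_{ij})^2.$$
This yields a clean expression for $R_{b,\mu}$ against which I will match $E(\hat{R}_{b,\mu})$.

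Next I would take the expectation of \eqref{URE} one summand at a time. The first piece is straightforward: expanding $Y_{ij}-\mu_j=(Y_{ij}-\theta_{ij})+(\theta_{ij}-\mu_j)$ gives $E[(Y_{ij}-\mu_j)^2]=V(\theta_{ij})/\tau_{ij}+(\mu_j-\theta_{ij})^2$. The crux of the argument is the second piece, $E[V(Y_{ij})]$. Here I would use $E(Y_{ij}^2)=\theta_{ij}^2+V(\theta_{ij})/\tau_{ij}$ to obtain
$$E[V(Y_{ij})]=\nu_0+\nu_1\theta_{ij}+\nu_2\Big(\theta_{ij}^2+\frac{V(\theta_{ij})}{\tau_{ij}}\Big)=V(\theta_{ij})\,\frac{\tau_{ij}+\nu_2}{\tau_{ij}}.$$
This is precisely the step that explains and justifies the otherwise mysterious $\tau_{ij}+\nu_2$ in the denominator of \eqref{URE}: dividing through by $\tau_{ij}+\nu_2$ cancels exactly, so that $E[V(Y_{ij})/(\tau_{ij}+\nu_2)]=V(\theta_{ij})/\tau_{ij}$, i.e.\ $V(Y_{ij})/(\tau_{ij}+\nu_2)$ is an unbiased estimator of the per-coordinate variance. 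This quadratic-variance identity is the only place where the assumed form of $\hbox{diag}(\Cov(Y_i))$ and the polynomial $V$ enter, and it is the single key observation on which the whole proposition rests; everything else is bookkeeping.

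Finally I would assemble the pieces. Substituting the two expectations into $E(\hat{R}_{b,\mu})$, the $(i,j)$ summand becomes $b_i^2(\mu_j-\theta_{ij})^2+(b_i^2+1-2b_i)\,V(\theta_{ij})/\tau_{ij}$, and recognizing $b_i^2+1-2b_i=(1-b_i)^2$ reproduces exactly the summand of $R_{b,\mu}$ computed above. Summing over $i$ and $j$ and dividing by $np$ then gives $E(\hat{R}_{b,\mu})=R_{b,\mu}$. I do not anticipate any genuine obstacle beyond the quadratic-variance identity; the remaining manipulations require only finiteness of the relevant second moments, which is guaranteed by the standing assumption that each $F_i$ has a finite covariance matrix.
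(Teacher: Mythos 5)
Your proposal is correct and follows essentially the same route as the paper: both arguments reduce to the key quadratic-variance identity $E\big(V(Y_{ij})/(\tau_{ij}+\nu_2)\big)=\Var(Y_{ij})$ and then match $E(\hat{R}_{b,\mu})$ against $R_{b,\mu}$ term by term, each equaling $\frac{1}{np}\sum_{i,j}\big[b_i^2(\theta_{ij}-\mu_j)^2+(1-b_i)^2\Var(Y_{ij})\big]$. The only (cosmetic) difference is that you obtain the risk via the bias--variance decomposition $\hat{\theta}_{ij}^{b,\mu}-\theta_{ij}=(1-b_i)(Y_{ij}-\theta_{ij})+b_i(\mu_j-\theta_{ij})$ with a vanishing cross term, which is slightly cleaner than the paper's add-and-subtract expansion using the trinomial identity.
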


\begin{proof} 
Taking the expectation of the estimator in \eqref{URE}, we obtain 
\begin{align}
\label{proposition_proof_1}
E\big(\hat{R}_{b,\mu}\big)&= \frac{1}{np} \sum_{i=1}^{n} \sum_{j=1}^{p} \bigg[b_i^2E(Y_{ij}- \mu_j)^2 + (1-2b_i)E\bigg(\frac{V(Y_{ij})}{\tau_{ij}+\nu_2}\bigg)\bigg] .
\end{align}
The term $E(Y_{ij}- \mu_j)^2$ in \eqref{proposition_proof_1} can be expressed as
\begin{align*}
E(Y_{ij}- \mu_j)^2&=\Var(Y_{ij}-\mu_j)+ \big( E(Y_{ij} -\mu_j) \big)^2\\
&=\Var(Y_{ij})+ \big( E(Y_{ij}) -\mu_j \big)^2. 
\end{align*}
Further, the term $E \big(V(Y_{ij})/(\tau_{ij}+v_2)\big)$ in \eqref{proposition_proof_1} can be simplified to
\begin{align}
\label{proposition_proof_0}
E\bigg(\frac{V(Y_{ij})}{\tau_{ij}+\nu_2}\bigg)&=\frac{\nu_0+\nu_1E(Y_{ij})+\nu_2E(Y_{ij}^2)}{\tau_{ij}+\nu_2} \nonumber \\
&=\frac{\nu_0+\nu_1E(Y_{ij})+\nu_2\Var(Y_{ij})+\nu_2(E(Y_{ij}))^2}{\tau_{ij}+\nu_2} \nonumber \\
&=\frac{\tau_{ij}\Var(Y_{ij})+\nu_2\Var(Y_{ij})}{\tau_{ij}+\nu_2} \nonumber \\
&=\Var(Y_{ij}),
\end{align}
where we use the fact that 
$$
\tau_{ij} \Var(Y_{ij}) = V(\theta_{ij}) = \nu_0+\nu_1E(Y_{ij})+\nu_2(E(Y_{ij}))^2 .
$$
On summing over all $i, j$, we find that \eqref{proposition_proof_1} equals
\begin{align}
\label{proposition_proof_2}
E\big(\hat{R}_{b,\mu}\big)&= \frac{1}{np} \sum_{i=1}^{n} \sum_{j=1}^{p} \big[b_i^2 \big( \Var(Y_{ij})+(E(Y_{ij})- \mu_j)^2 \big) + (1-2b_i)\Var(Y_{ij})\big] \nonumber \\
&= \frac{1}{np} \sum_{i=1}^{n} \sum_{j=1}^{p} \big[b_i^2(\theta_{ij}-\mu_j)^2+(1-b_i)^2\Var(Y_{ij})\big].
\end{align}
Now, note that
\begin{align}
\label{risk_proof_1}
R_{b,\mu} &= \frac{1}{np} \sum_{i=1}^{n} \sum_{j=1}^{p} E(\hat{\theta}_{ij}^{b,\mu}-\theta_{ij})^2 \nonumber \\
&= \frac{1}{np} \sum_{i=1}^{n} \sum_{j=1}^{p} E\big( (1-b_i)Y_{ij}+b_i \mu_j-\theta_{ij}\big)^2 \nonumber \\
&= \frac{1}{np} \sum_{i=1}^{n} \sum_{j=1}^{p} E\big( (1-b_i)Y_{ij}+b_i\mu_j-\theta_{ij} +b_i\theta_{ij}-b_i\theta_{ij}\big)^2.
\end{align}
By rearranging the terms and applying the trinomial identity $(x+y+z)^2\equiv x^2 + y^2 + z^2 + 2xy + 2yz + 2zx$ for $x,y,z \in \mathbb{R}$, we obtain \eqref{risk_proof_1} in the form
\begin{align}
\label{risk_proof_2}
R_{b,\mu}&= \frac{1}{np} \sum_{i=1}^{n} \sum_{j=1}^{p} \big[ b_i^2(\theta_{ij}-\mu_j)^2+(1-b_i)^2E(Y_{ij}^2)+(1-b_i)^2\theta_{ij}^2-2(1-b_i)^2\theta_{ij}^2 \nonumber \\
&  \qquad \qquad \qquad \qquad -2b_i(1-b_i)\theta_{ij}(\theta_{ij}-\mu_j)+2b_i(1-b_i)\theta_{ij}(\theta_{ij}-\mu_j)\big] \nonumber \\
&=\frac{1}{np} \sum_{i=1}^{n} \sum_{j=1}^{p} \big[b_i^2(\theta_{ij}-\mu_j)^2+(1-b_i)^2\Var(Y_{ij})\big].
\end{align}
It follows from \eqref{proposition_proof_2} and \eqref{risk_proof_2} that $E(\hat{R}_{b,\mu})=R_{b,\mu}$ and therefore the $\hat{R}_{b,\mu}$ estimator in \eqref{URE} is an unbiased estimator of the risk in \eqref{risk_1}. 
\end{proof}

Consequently, we aim to find $\hat{b}^{*}$ and $\hat{\mu}^{*}$ that minimize \eqref{URE} over the set 
\begin{align}
\label{min_set}
\Lambda=\Big\{(b,\mu):& b_i \in [0,1], \nonumber \\
& |\mu_j|\leq \max\{|Y_{il}|: i= 1,\ldots,n, l=1,\ldots,p\} \text{ for $j=1,\ldots, p$}\Big\},
\end{align}
and the shrinkage estimator in \eqref{estimator_mu} becomes
\begin{align}
\label{estimator_mu_URE}
\hat{\theta}_{i}^{\hat{b}^*,\hat{\mu}^*}=(1-\hat{b}_{i}^{*})Y_i+\hat{b}_i^{*} \hat{\mu}^{*},
\end{align}
$i=1,\ldots,n$.

Naturally, the next issue that arises is to determine whether or not $\hat{b}^{*}$ and $\hat{\mu}^{*}$ are good estimators of the actual parameters. In other words, if the estimator $\hat{R}_{b,\mu}$ is a good approximation of the risk $R_{b,\mu}$ then we will expect to have correspondingly good results regarding estimation of $b$ and $\mu$. In Theorem \ref{uniform_URE} we will show that not only is the estimator in \eqref{URE} a good estimator of the risk in \eqref{risk_1}, but also it is uniformly close to the loss $\ell_{b, \mu}$.  Further, in Theorem \ref{theorem_optimality}, we will provide the asymptotic behavior of our proposed estimators \eqref{estimator_mu_URE} among a large class of shrinkage estimators.

Now, we introduce the assumptions that we need for proving the main results of the article:
\begin{enumerate}[label=(\Alph*)]
\item \label {A} $ \limsup\limits_{n, p \rightarrow \infty} \sum_{i=1}^{n} \sum_{j=1}^{p} \Var(Y_{ij})/np<\infty$,
\item \label {B} $ \limsup\limits_{n, p \rightarrow \infty} \sum_{i=1}^{n}\sum_{j=1}^{p}\Var(Y_{ij})\theta_{ij}^2/np<\infty$,
\item \label {C} $ \limsup\limits_{n, p \rightarrow \infty} \sum_{i=1}^{n}\sum_{j=1}^{p} \Var(Y_{ij}^2)/np<\infty$,
\item \label {D} $\sup\limits_{i,j} \big(\tau_{ij}/(\tau_{ij}+\nu_2)\big)^2<\infty$,
\item \label {E}  $E(\max\limits_{i,j} Y_{ij}^2)/n= \text{O}(n^{-\alpha}p^{\beta})$ for some $\alpha>0$ and $\beta\ge 0$ such that $n^{-\alpha}p^\beta\rightarrow 0$ as $n,p\rightarrow\infty$.
\end{enumerate}

We now present the two main results on the first class of shrinkage estimators given in \eqref{estimator_mu_URE}.  The first result establishes the uniform convergence of the $\hat{R}_{b,\mu}$ estimator to the actual loss.

\begin{theorem}
\label{uniform_URE}
Suppose that the assumptions \textnormal{\ref{A}-\ref{E}} hold.  Then, as $n, p \rightarrow\infty$, 
\begin{align}
\label{uniform_URE_result}
E\Big(\sup_{(b,\mu) \in \Lambda} |\hat{R}_{b,\mu}-\ell_{b,\mu}|\Big) = \textnormal{O}(n^{-1/2}+n^{-\alpha/2}p^{\beta/2}).
\end{align}
\end{theorem}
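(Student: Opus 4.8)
The plan is to expand both quantities around the mean by setting $\epsilon_{ij}=Y_{ij}-\theta_{ij}$, so that $E(\epsilon_{ij})=0$. Substituting $\hat{\theta}_{ij}^{b,\mu}=(1-b_i)Y_{ij}+b_i\mu_j$ into \eqref{loss_1} gives $\hat{\theta}_{ij}^{b,\mu}-\theta_{ij}=(1-b_i)\epsilon_{ij}-b_i(\theta_{ij}-\mu_j)$, while the identity established in \eqref{proposition_proof_0} shows that $E\big(V(Y_{ij})/(\tau_{ij}+\nu_2)\big)=\Var(Y_{ij})=E(\epsilon_{ij}^2)$, i.e. $\epsilon_{ij}^2-V(Y_{ij})/(\tau_{ij}+\nu_2)$ is centered. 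After the common $b_i^2(\theta_{ij}-\mu_j)^2$ terms cancel, the difference reduces to a sum of three \emph{centered} statistics,
\[\hat{R}_{b,\mu}-\ell_{b,\mu}=\frac{1}{np}\sum_{i=1}^{n}\sum_{j=1}^{p}\Big[2b_i\,\theta_{ij}\epsilon_{ij}-2b_i\mu_j\,\epsilon_{ij}+(2b_i-1)\Big(\epsilon_{ij}^2-\tfrac{V(Y_{ij})}{\tau_{ij}+\nu_2}\Big)\Big],\]
and I would bound the supremum of the absolute value of each of the three groups separately by the triangle inequality.

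The key difficulty is that $b$ ranges over an $n$-dimensional set: a crude bound using only $b_i\in[0,1]$ replaces each centered sum by $\sum_i|\cdots|$, destroys the cancellation, and yields only an $\mathrm{O}(1)$ estimate. The resolution is to exploit the monotonicity constraint defining $\Lambda$. Ordering the (deterministic) indices so that $\tau_{(1)\boldsymbol{\cdot}}\ge\cdots\ge\tau_{(n)\boldsymbol{\cdot}}$ makes $b_{(1)}\le\cdots\le b_{(n)}$ a monotone sequence in $[0,1]$ (and $2b_{(i)}-1$ monotone in $[-1,1]$), so summation by parts bounds, for instance, $\sup_b|\sum_i b_i Z_i|$ by a constant multiple of $\max_{1\le K\le n}|\sum_{i\le K}Z_{(i)}|$, the maximal partial sum in the $\tau$-ordering. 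Since $Z_{(i)}$ is a function of $Y_{(i)}$ alone and the $Y_i$ are independent, these partial sums form a mean-zero martingale, and Kolmogorov's (Doob's $L^2$) maximal inequality gives $E\max_K(\sum_{i\le K}Z_{(i)})^2\le 4\sum_i\Var(Z_i)$.

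Applying this to the first group with $Z_i=\sum_j\theta_{ij}\epsilon_{ij}$ and to the third with $Z_i=\sum_j\big(\epsilon_{ij}^2-V(Y_{ij})/(\tau_{ij}+\nu_2)\big)$, I would control the within-$i$ variances, which contain off-diagonal covariances because the diagonal families need not have independent coordinates, by Cauchy--Schwarz in the form $\Var(\sum_j X_j)\le p\sum_j\Var(X_j)$. For the first group this produces $\sum_i\Var(Z_i)\le p\sum_{i,j}\theta_{ij}^2\Var(Y_{ij})=\mathrm{O}(np^2)$ by assumption \ref{B}, so the prefactor $(np)^{-1}$ times $\sqrt{\mathrm{O}(np^2)}=\mathrm{O}(p\sqrt{n})$ collapses to $\mathrm{O}(n^{-1/2})$. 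For the third group I would observe that $\epsilon_{ij}^2-V(Y_{ij})/(\tau_{ij}+\nu_2)$ is a quadratic polynomial in $Y_{ij}$ whose leading coefficient is $\tau_{ij}/(\tau_{ij}+\nu_2)$, so by assumptions \ref{C} and \ref{D} together with \ref{A} and \ref{B} its variance summed over $i,j$ is again $\mathrm{O}(np)$, giving the same $\mathrm{O}(n^{-1/2})$ rate.

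The second group is the most delicate, since it couples $b$, $\mu$, and the random radius $M:=\max_{i,l}|Y_{il}|$ bounding the $|\mu_j|$. Writing it as $\sum_j\mu_j\sum_i b_i\epsilon_{ij}$, I would first take the supremum over the box $\{|\mu_j|\le M\}$ to obtain $M\sum_j|\sum_i b_i\epsilon_{ij}|$, then apply the monotone summation-by-parts bound for each $j$ to reach $M\sum_j\max_K|\sum_{i\le K}\epsilon_{(i)j}|$. A Cauchy--Schwarz step separates the expectation into $\sqrt{E(M^2)}$ and $\sqrt{E(\sum_j\max_K|\cdots|)^2}$: the first is $\mathrm{O}(n^{(1-\alpha)/2}p^{\beta/2})$ by assumption \ref{E}, while the second is $\mathrm{O}(p\sqrt{n})$ by \ref{A}, a further Cauchy--Schwarz across $j$, and the maximal inequality. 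Multiplying by $(np)^{-1}$ and combining exponents yields exactly $\mathrm{O}(n^{-\alpha/2}p^{\beta/2})$, and summing the three contributions gives the claimed bound $\mathrm{O}(n^{-1/2}+n^{-\alpha/2}p^{\beta/2})$. I expect the monotonicity/summation-by-parts argument of the second paragraph to be the main obstacle, both because it is what rescues the rate from the useless $\mathrm{O}(1)$ estimate and because the same device must be coordinated with the extra $\mu$- and $M$-bookkeeping in the second group.
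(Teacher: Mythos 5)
Your proposal is correct and takes essentially the same route as the paper: your centered decomposition is algebraically identical to the paper's \eqref{uniform_URE_proof_5}, and your main device---reducing the supremum over monotone $b$ to maximal partial sums (summation by parts, versus the paper's extreme-point identity $\sup_{1\ge b_1\ge\cdots\ge b_n\ge 0}|\sum_i b_iZ_i|=\max_k|\sum_{i\le k}Z_i|$) followed by Doob's $L^2$ maximal inequality, Jensen/Cauchy--Schwarz for the within-vector dependence, and assumption \ref{E} for the random radius---is exactly the argument of Lemma \ref{lemma_T1_T2_T3}. The only cosmetic deviations are that you treat the $(2b_i-1)$-weighted block as one piece where the paper splits it into $T_1$ and $T_2$, and you apply Cauchy--Schwarz once globally in the $\mu$-term where the paper applies it per coordinate $j$; both yield the same $\textnormal{O}(n^{-1/2}+n^{-\alpha/2}p^{\beta/2})$ rate.
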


The second theorem shows that our proposed estimator is asymptotically optimal among a large class of shrinkage estimators.

\begin{theorem}
\label{theorem_optimality}
Suppose that the assumptions \textnormal{\ref{A}-\ref{E}} hold, and consider any shrinkage estimator of the form
\begin{align}
\hat{\theta}_{i}^{\hat{b},\hat{\mu}}=(1-\hat{b}_{i})Y_i+\hat{b}_i \hat{\mu}, \nonumber
\end{align}
 $i=1,\ldots, n,$ where $(\hat{b},\hat{\mu}) \in \Lambda$. Then, as $n, p \rightarrow \infty$, 
\begin{align}
\label{optimaility_result_1}
 \ell_{\hat{b}^{*},\hat{\mu}^{*} } \leq \ell_{\hat{b},\hat{\mu}} + \textnormal{O}_{\mathbb{P}}(n^{-1/2}+n^{-\alpha/2}p^{\beta/2}), 
\end{align}
and
\begin{align}
\label{optimaility_result_2}
\limsup\limits_{n, p\rightarrow\infty} (R_{\hat{b}^{*},\hat{\mu}^{*} } - R_{\hat{b},\hat{\mu}})\leq 0.
\end{align}
\end{theorem}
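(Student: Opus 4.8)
The plan is to derive both conclusions as consequences of Theorem~\ref{uniform_URE} together with the defining property of $(\hat{b}^{*},\hat{\mu}^{*})$, namely that it minimizes $\hat{R}_{b,\mu}$ over $\Lambda$. Write $\Delta_{n,p} := \sup_{(b,\mu)\in\Lambda}|\hat{R}_{b,\mu}-\ell_{b,\mu}|$, so that Theorem~\ref{uniform_URE} reads $E(\Delta_{n,p}) = \textnormal{O}(n^{-1/2}+n^{-\alpha/2}p^{\beta/2})$. The key observation is that no new probabilistic estimate is needed beyond this uniform bound; the argument is a deterministic sandwich carried out for each realization of the data, after which we pass to probabilities and to expectations.

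First I would establish, pointwise in the data, the chain of inequalities
\begin{align}
\ell_{\hat{b}^{*},\hat{\mu}^{*}}
\;\le\; \hat{R}_{\hat{b}^{*},\hat{\mu}^{*}} + \Delta_{n,p}
\;\le\; \hat{R}_{\hat{b},\hat{\mu}} + \Delta_{n,p}
\;\le\; \ell_{\hat{b},\hat{\mu}} + 2\Delta_{n,p}. \nonumber
\end{align}
Here the outer two steps use $|\ell_{b,\mu}-\hat{R}_{b,\mu}|\le\Delta_{n,p}$ at the points $(\hat{b}^{*},\hat{\mu}^{*})$ and $(\hat{b},\hat{\mu})$, both of which lie in $\Lambda$; the middle step uses that $(\hat{b}^{*},\hat{\mu}^{*})$ minimizes $\hat{R}_{b,\mu}$ over $\Lambda$, so in particular $\hat{R}_{\hat{b}^{*},\hat{\mu}^{*}}\le\hat{R}_{\hat{b},\hat{\mu}}$. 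I note that although $\Lambda$ is itself data-dependent (the constraint on $\mu$ involves $\max_{i,l}|Y_{il}|$), this causes no difficulty, because the supremum, the minimizer, and the competing pair $(\hat{b},\hat{\mu})$ are all referred to the same realized $\Lambda$.

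For \eqref{optimaility_result_1} I would convert the expectation bound into an in-probability statement: since $\Delta_{n,p}\ge 0$ and $E(\Delta_{n,p})=\textnormal{O}(n^{-1/2}+n^{-\alpha/2}p^{\beta/2})$, Markov's inequality gives $\Delta_{n,p}=\textnormal{O}_{\mathbb{P}}(n^{-1/2}+n^{-\alpha/2}p^{\beta/2})$, and substituting this into the sandwich yields \eqref{optimaility_result_1} directly. For \eqref{optimaility_result_2}, interpreting the risk of a data-driven rule as $R_{\hat{b}^{*},\hat{\mu}^{*}}=E(\ell_{\hat{b}^{*},\hat{\mu}^{*}})$ and likewise for $(\hat{b},\hat{\mu})$, I would take expectations across the sandwich to obtain $R_{\hat{b}^{*},\hat{\mu}^{*}}-R_{\hat{b},\hat{\mu}}\le 2E(\Delta_{n,p})$. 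Finally, assumption~\ref{E} gives $n^{-\alpha/2}p^{\beta/2}=(n^{-\alpha}p^{\beta})^{1/2}\to 0$, and $n^{-1/2}\to 0$ as well, so $E(\Delta_{n,p})\to 0$ and \eqref{optimaility_result_2} follows.

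The genuine work sits in Theorem~\ref{uniform_URE}, which we may assume; granting it, the present theorem is essentially immediate. The only points demanding care are the bookkeeping of the sandwich (ensuring the comparison inequality holds pointwise before expectations or probabilities are taken) and the interpretation of $R_{\hat{b}^{*},\hat{\mu}^{*}}$ as the expected loss of the data-driven procedure rather than the risk at a fixed parameter. I would expect the only real subtlety to be confirming that taking expectations is legitimate, i.e.\ that $\ell_{\hat{b}^{*},\hat{\mu}^{*}}$ and $\ell_{\hat{b},\hat{\mu}}$ are integrable; this follows from assumptions~\ref{A}--\ref{E}, which already underwrite the finiteness of $E(\Delta_{n,p})$ and of the individual risks.
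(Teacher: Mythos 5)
Your proposal is correct and follows essentially the same route as the paper: both rest on the minimizing property $\hat{R}_{\hat{b}^{*},\hat{\mu}^{*}}\leq \hat{R}_{\hat{b},\hat{\mu}}$ combined with the uniform bound of Theorem \ref{uniform_URE}, using Markov's inequality for \eqref{optimaility_result_1} and taking expectations of the resulting two-sided bound for \eqref{optimaility_result_2}. Your single pointwise sandwich $\ell_{\hat{b}^{*},\hat{\mu}^{*}}\leq \ell_{\hat{b},\hat{\mu}}+2\Delta_{n,p}$ is just a streamlined repackaging of the paper's telescoping decomposition \eqref{optimality_theorem_proof_2} and the union-bound step \eqref{optimality_theorem_proof_0}, and your explicit remarks on the data-dependence of $\Lambda$ and the interpretation of $R_{\hat{b}^{*},\hat{\mu}^{*}}$ as the expected loss of the data-driven rule are points the paper handles implicitly.
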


Because the proofs of Theorem \ref{uniform_URE} and \ref{theorem_optimality} are lengthy, we will present them at the end of this section.

\begin{remark}
{\rm
Interestingly, assumptions \ref{A} and \ref{B} hold if the condition
\begin{align}
\limsup\limits_{n\rightarrow \infty}  \frac{1}{np}\sum_{i=1}^{n}\sum_{j=1}^{p} \frac{\theta_{ij}^k}{\tau_{ij}}<\infty,\nonumber
\end{align}
$k=0,1, 2, 3, 4$ is satisfied. The above statement holds because
\begin{align*}
 \sum_{i=1}^{n}\sum_{j=1}^{p}\Var(Y_{ij}) &=
\sum_{i=1}^{n}\sum_{j=1}^{p} \frac{\nu_0+\nu_1\theta_{ij}+\nu_2 \theta_{ij}^2}{\tau_{ij}} \\
&=\nu_0\sum_{i=1}^{n}\sum_{j=1}^{p}\frac{1}{\tau_{ij}}+\nu_1\sum_{i=1}^{n}\sum_{j=1}^{p}\frac{\theta_{ij}}{\tau_{ij}}+\nu_2\sum_{i=1}^{n}\sum_{j=1}^{p} \frac{\theta_{ij}^2}{\tau_{ij}}
\end{align*}
and 
\begin{align*}
\sum_{i=1}^{n} \sum_{j=1}^{p}\Var(Y_{ij})\theta_{ij}^2 &=
\nu_0\sum_{i=1}^{n}\sum_{j=1}^{p} \frac{\theta_{ij}^2}{\tau_{ij}}+\nu_1\sum_{i=1}^{n}\sum_{j=1}^{p}\ \frac{\theta_{ij}^3}{\tau_{ij}}+\nu_2\sum_{i=1}^{n}\sum_{j=1}^{p} \frac{\theta_{ij}^4}{\tau_{ij}}.
\end{align*}
Thus, assumptions \ref{A} and \ref{B}  place restrictions on the growth of $\theta_{ij}^k / \tau_{ij}$, $k=0,\ldots,4,$ for all $i,j$. 
}
\end{remark}

\begin{remark}
{\rm
On the other hand, assumption \ref{C}, because it involves the fourth moment of each $Y_{ij}$, places restrictions on the growth of the kurtosis of $Y_{ij}$. This leads us to observe that the condition, 
\begin{align}
\label{sufficient_condition_1}
\limsup\limits_{n\rightarrow \infty} \frac{1}{np} \sum_{i=1}^{n}\sum_{j=1}^{p} E(Y_{ij}^4)<\infty,
\end{align}
$i=1,\ldots,p$ implies conditions \ref{A}, \ref{B}, and \ref{C}.  We prove this as follows.

By Jensen's inequality, we have $\Var(Y_{ij})\leq E(Y_{ij}^2)\leq ( E(Y_{ij}^4))^{1/2}$. Applying the Cauchy-Schwarz inequality, we obtain 
\begin{align*}
\frac{1}{np} \sum_{i=1}^{n}\sum_{j=1}^{p}\Var(Y_{ij}) 
&\leq \frac{1}{np} \sum_{i=1}^{n}\sum_{j=1}^{p} \big( E(Y_{ij}^4)\big)^{1/2} 
\leq \bigg( \frac{1}{np} \sum_{i=1}^{n}\sum_{j=1}^{p} E(Y_{ij}^4) \bigg)^{1/2},   
\end{align*}
and therefore \eqref{sufficient_condition_1} implies \ref{A}.

As for \ref{B}, we have
\begin{align*}
\frac{1}{np} \sum_{i=1}^{n}\sum_{j=1}^{p} \Var(Y_{ij})\theta_{ij}^2 
& \leq \frac{1}{np} \sum_{i=1}^{n} \sum_{j=1}^{p}E(Y_{ij}^2)\theta_{ij}^2 \\
&\leq \bigg(\frac{1}{np} \sum_{i=1}^{n}\sum_{j=1}^{p} E(Y_{ij}^4)\bigg)^{1/2} \bigg( \frac{1}{np} \sum_{i=1}^{n}\sum_{j=1}^{p}\theta_{ij}^4\bigg)^{1/2},
\end{align*}
where the second inequality follows from the Cauchy-Schwarz inequality. By Jensen's inequality, as $(E(Y_{ij}^2))^2 \leq E(Y_{ij}^4)$, then we see that \eqref{sufficient_condition_1} implies \ref{B}.

Finally, since $\Var(Y_{ij}^2)\leq E(Y_{ij}^4)$ then
\begin{align*}
\frac{1}{np} \sum_{i=1}^{n} \sum_{j=1}^{p}\Var(Y_{ij}^2)
&\leq \frac{1}{np} \sum_{i=1}^{n}\sum_{j=1}^{p}E(Y_{ij}^4),
\end{align*}
and therefore \eqref{sufficient_condition_1} implies \ref{C}.
}
\end{remark}

\begin{remark}
{\rm
Later in Lemma \ref{sufficient_conditions_E}, we will derive sufficient conditions for which condition \ref{E} holds in the special case where each $Y_{ij}$ belongs to a natural exponential family with quadratic variance functions.
}
\end{remark}

Let us define the quantities 
\begin{align*}
&T_1= \frac{1}{np} \bigg| \sum_{i=1}^{n} \sum_{j=1}^{p} 
\bigg[ \frac{V(Y_{ij})}{\tau_{ij}+v_2}-(Y_{ij}-\theta_{ij})^2 \bigg] \bigg| \\
&T_2=\frac{2}{np} \sup_{(b,\mu) \in \Lambda}  \bigg| \sum_{i=1}^{n} \sum_{j=1}^{p} 
b_i \bigg[ \frac{V(Y_{ij})}{\tau_{ij}+v_2}-(Y_{ij}-\theta_{ij})^2 \bigg] \bigg|, \\
\intertext{and}
&T_3=\frac{2}{np} \sup_{(b,\mu) \in \Lambda}  \bigg| \sum_{i=1}^{n} \sum_{j=1}^{p} 
b_i(Y_{ij}-\theta_{ij})(\theta_{ij}-\mu_{j}) \bigg|.
\end{align*}
In proving the previously-stated theorems, we need the following lemma which provides the convergence properties of $T_1$, $T_2$, and $T_3$.

\begin{lemma}
\label{lemma_T1_T2_T3}
Suppose that the assumptions \textnormal{\ref{A}-\ref{E}} hold.  Then, $E(|T_1|) = \textnormal{O}(n^{-1/2})$, $E(|T_2|) = \textnormal{O}(n^{-1/2})$, and $E(|T_3|)= \textnormal{O}(n^{-1/2} + n^{-\alpha/2}p^{\beta/2})$ as $n, p \rightarrow\infty$.
\end{lemma}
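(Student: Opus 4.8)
The plan is to reduce all three quantities to sums of independent, mean-zero summands in the group index $i$, and then control them by second-moment (Jensen/Cauchy--Schwarz) and maximal-inequality arguments, using assumptions \ref{A}--\ref{E} to keep the variances under control. The key preliminary observation, which I would record first, is that the random variable $Z_{ij} := V(Y_{ij})/(\tau_{ij}+\nu_2) - (Y_{ij}-\theta_{ij})^2$ has mean zero: this is exactly the identity $E\big(V(Y_{ij})/(\tau_{ij}+\nu_2)\big)=\Var(Y_{ij})=E(Y_{ij}-\theta_{ij})^2$ established in \eqref{proposition_proof_0}. Moreover $Z_{ij}$ is a quadratic polynomial in $Y_{ij}$ whose leading coefficient is $-\tau_{ij}/(\tau_{ij}+\nu_2)$, so by assumption \ref{D} and a crude expansion $\Var(Z_{ij}) \le C\big(\Var(Y_{ij}^2)+\theta_{ij}^2\Var(Y_{ij})+\Var(Y_{ij})\big)$ for a universal constant $C$; hence $\tfrac{1}{np}\sum_{i,j}\Var(Z_{ij})=\mathrm{O}(1)$ by \ref{A}, \ref{B}, \ref{C}. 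I will also abbreviate $U_{ij}:=Y_{ij}-\theta_{ij}$, which is mean-zero with $\Var(U_{ij})=\Var(Y_{ij})$. Throughout, the groups $Y_1,\dots,Y_n$ are independent, so the per-group sums $W_i:=\sum_{j}Z_{ij}$ (and analogous quantities) are independent across $i$, whereas the within-group components need not be, so I will absorb within-group dependence by the Cauchy--Schwarz bound $\Var(\sum_j c_j U_{ij})\le p\sum_j c_j^2\Var(Y_{ij})$ and its analogue for $Z_{ij}$.

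For $T_1$ I would apply Jensen's inequality, $E|T_1|\le \tfrac{1}{np}\big(\Var(\sum_{i,j}Z_{ij})\big)^{1/2}$, then use independence across $i$ to write $\Var(\sum_{i,j}Z_{ij})=\sum_i\Var(W_i)$ and the Cauchy--Schwarz bound $\Var(W_i)\le p\sum_j\Var(Z_{ij})$. This gives $\Var(\sum_{i,j}Z_{ij})\le p\sum_{i,j}\Var(Z_{ij})=\mathrm{O}(np^2)$, and dividing by $np$ yields $E|T_1|=\mathrm{O}(n^{-1/2})$, the $p$-factors cancelling exactly.

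For $T_2$ the obstacle is the supremum over $b$; here the monotonicity constraint in $\Lambda$ is essential, since the naive bound $\sup_b|\sum_i b_iW_i|\le\sum_i|W_i|$ is too lossy (it only gives $\mathrm{O}(1)$). I would relabel the groups so that $\tau_{1\boldsymbol{\cdot}}\ge\cdots\ge\tau_{n\boldsymbol{\cdot}}$, which makes every admissible $b$ a nondecreasing sequence in $[0,1]$. Writing $b_i=\int_0^1 \mathbf{1}\{t<b_i\}\,dt$ and using that $\{i:t<b_i\}$ is an up-set $\{m(t),\dots,n\}$, one gets $\sum_i b_iW_i=\int_0^1\big(\sum_{i\ge m(t)}W_i\big)\,dt$, so $\sup_b|\sum_i b_iW_i|\le 2\max_{0\le k\le n}|S_k|$ with $S_k:=\sum_{i=1}^k W_i$. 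Since $S_k$ is a sum of independent mean-zero terms, Kolmogorov's (Doob's $L^2$) maximal inequality gives $E(\max_k|S_k|)\le 2(\Var S_n)^{1/2}$, and $\Var S_n=\sum_i\Var(W_i)=\mathrm{O}(np^2)$ as before. Dividing by $np$ again produces $E|T_2|=\mathrm{O}(n^{-1/2})$.

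For $T_3$, which I expect to be the hardest part because it carries two suprema and the random envelope $M:=\max_{i,l}|Y_{il}|$, I would first split $(\theta_{ij}-\mu_j)$ and, after the same monotonicity/layer-cake reduction in $b$, bound the partial sums by $\sup_\mu\big|\sum_{i=1}^k U_{ij}(\theta_{ij}-\mu_j)\big|\le \big|\sum_{i=1}^k A_i\big| + M\sum_j\big|\sum_{i=1}^k U_{ij}\big|$, where $A_i:=\sum_j U_{ij}\theta_{ij}$ and I use $|\mu_j|\le M$. The first term is handled exactly as $T_2$: Doob's inequality plus $\sum_i\Var(A_i)\le p\sum_{i,j}\theta_{ij}^2\Var(Y_{ij})=\mathrm{O}(np^2)$ by \ref{B}, contributing $\mathrm{O}(n^{-1/2})$ after division by $np$. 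For the second term, I would apply Cauchy--Schwarz on the probability space, $E\big(M\cdot X\big)\le (EM^2)^{1/2}(EX^2)^{1/2}$ with $X:=\max_k\sum_j|\sum_{i=1}^k U_{ij}|$; here assumption \ref{E} gives $EM^2=E(\max_{i,j}Y_{ij}^2)=\mathrm{O}(n^{1-\alpha}p^{\beta})$, while $X^2\le p\sum_j(\max_k|\sum_{i=1}^kU_{ij}|)^2$ together with Doob's inequality and \ref{A} gives $EX^2=\mathrm{O}(np^2)$. Multiplying and dividing by $np$ yields the $\mathrm{O}(n^{-\alpha/2}p^{\beta/2})$ term, so that $E|T_3|=\mathrm{O}(n^{-1/2}+n^{-\alpha/2}p^{\beta/2})$, completing the proof. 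The delicate bookkeeping is keeping track of where the crude factor $p$ (from within-group dependence and from the $\sum_j$ in $X^2$) enters, and checking that it always cancels against the $1/(np)$ normalization; the genuinely substantive steps are the monotonicity-to-partial-sums reduction and the interplay of the Cauchy--Schwarz split with assumption \ref{E} in the $\mu$-term.
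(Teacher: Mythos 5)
Your proposal is correct and takes essentially the same route as the paper's proof in Appendix \ref{Appendix}: the same mean-zero rewriting of $V(Y_{ij})/(\tau_{ij}+\nu_2)-(Y_{ij}-\theta_{ij})^2$ with variance controlled by (A)--(D), the same reduction of the supremum over monotone weights $b$ to maxima of partial sums handled by Doob's $L^2$ maximal inequality, and the same split $T_3\le T_{31}+T_{32}$ with Cauchy--Schwarz and assumption (E) absorbing the random envelope $\max_{i,l}|Y_{il}|$ bounding $|\mu_j|$. Your layer-cake representation of the monotone weights (costing a harmless factor $2$ via tail sums, where the paper identifies the supremum with initial partial sums directly) and your single aggregated Cauchy--Schwarz over $j$ (versus the paper's per-$j$ Cauchy--Schwarz followed by Jensen over $j$) are only cosmetic variants.
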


Since the proof of Lemma \ref{lemma_T1_T2_T3} is lengthy, we provide it in Appendix \ref{appendix_A}.  Now, we have all the results needed to prove Theorems \ref{uniform_URE} and \ref{theorem_optimality}.

\medskip

\begin{proof}[Proof of Theorem \ref{uniform_URE}]
By applying \eqref{estimator_mu}, \eqref{loss_1}, and \eqref{URE}, we obtain 
\begin{align}
\label{uniform_URE_proof_1}
\hat{R}_{b,\mu}-\ell_{b,\mu}&=\frac{1}{np} \sum_{i=1}^{n} \sum_{j=1}^{p} \bigg[b_i^2(Y_{ij} - \mu_j)^2 + (1-2b_i)\frac{V(Y_{ij})}{\tau_{ij}+\nu_2}\bigg] \nonumber \\
 & \qquad - \frac{1}{np} \sum_{i=1}^{n} \sum_{j=1}^{p} \big[(1-b_i)Y_{ij}+b_i\mu_j-\theta_{ij}\big]^2. 
\end{align}
Expanding the second term in \eqref{uniform_URE_proof_1}, we have
\begin{align}
\label{uniform_URE_proof_2}
& \frac{1}{np} \sum_{i=1}^{n} \sum_{j=1}^{p} \big[(1-b_i)Y_{ij}+b_i\mu_j-\theta_{ij}\big]^2 \nonumber \\
& \ \ = \frac{1}{np} \sum_{i=1}^{n} \sum_{j=1}^{p} \big[(Y_{ij}-\theta_{ij})^2+b_i^2(Y_{ij}-\mu_j)^2 
-2b_i(Y_{ij}-\theta_{ij})(Y_{ij}-\mu_j)\big].
\end{align}
Rearranging terms in \eqref{uniform_URE_proof_1} and applying  \eqref{uniform_URE_proof_2}, we find that   \eqref{uniform_URE_proof_1} is given by
\begin{align}
\label{uniform_URE_proof_3}
\hat{R}_{b,\mu}-\ell_{b,\mu}&=\frac{1}{np} \sum_{i=1}^{n} \sum_{j=1}^{p} (1-2b_i)\frac{V(Y_{ij})}{\tau_{ij}+\nu_2} -\frac{1}{np} \sum_{i=1}^{n} \sum_{j=1}^{p}(Y_{ij}-\theta_{ij})^2  \nonumber \\
 & \qquad + \frac{1}{np} \sum_{i=1}^{n} \sum_{j=1}^{p} 2b_i(Y_{ij}-\theta_{ij})(Y_{ij}-\mu_j).
\end{align}
For the third term of \eqref{uniform_URE_proof_3}, we note
\begin{align*}
\frac{1}{np} & \sum_{i=1}^{n} \sum_{j=1}^{p} 2b_i(Y_{ij} - \theta_{ij})(Y_{ij}-\mu_j) \\ 
&= \frac{1}{np} \sum_{i=1}^{n} \sum_{j=1}^{p} 2b_i(Y_{ij}-\theta_{ij})(\theta_{ij}-\mu_j+Y_{ij}-\theta_{ij}) \\
&=\frac{1}{np} \sum_{i=1}^{n} \sum_{j=1}^{p} 2b_i(Y_{ij}-\theta_{ij})(\theta_{ij}-\mu_j) + \frac{1}{np} \sum_{i=1}^{n} \sum_{j=1}^{p} 2b_i(Y_{ij}-\theta_{ij})^2.
\end{align*}
Hence, we find that \eqref{uniform_URE_proof_3} equals
\begin{align}
\label{uniform_URE_proof_5}
\hat{R}_{b,\mu}-\ell_{b,\mu} &= \frac{1}{np} \sum_{i=1}^{n} \sum_{j=1}^{p} 
(1-2b_i)\bigg[ \frac{V(Y_{ij})}{\tau_{ij}+\nu_2}-(Y_{ij}-\theta_{ij})^2 \bigg] \nonumber \\
& \qquad\qquad\qquad + \frac{2}{np} \sum_{i=1}^{n} \sum_{j=1}^{p} b_i(Y_{ij}-\theta_{ij})(\theta_{ij}-\mu_{j}). 
\end{align}
Taking the supremum over $\Lambda$ of the absolute value of the eq. \eqref{uniform_URE_proof_5}, we have
\begin{align}
\label{uniform_URE_proof_6}
\sup_{(b,\mu) \in \Lambda} | \hat{R}_{b,\mu}-\ell_{b,\mu}|
&\leq T_1+T_2+T_3.
\end{align}
By taking expectations on both sides of \eqref{uniform_URE_proof_6} and applying Lemma \ref{lemma_T1_T2_T3}, we find that \eqref{uniform_URE_result} holds. 
\end{proof}

\begin{proof}[Proof of Theorem \ref{theorem_optimality}]
Since $(\hat{b}^{*},\hat{\mu}^{*})$ is a minimizer of the estimator \eqref{URE}, we have that $\hat{R}_{\hat{b}^{*},\hat{\mu}^{*}}\leq \hat{R}_{\hat{b}, \hat{\mu}}$ for any $\hat{b} \text{ and }\hat{\mu}$.  For $\epsilon > 0$, 
\begin{align}
\label{optimality_theorem_proof_0}
P\big(\ell_{\hat{b}^{*},\hat{\mu}^{*} } \geq \ell_{\hat{b},\hat{\mu}} + \epsilon\big) &\leq 
 P\Big(\ell_{\hat{b}^{*},\hat{\mu}^{*}}- \hat{R}_{\hat{b}^{*},\hat{\mu}^{*}}  \geq \ell_{\hat{b},\hat{\mu}} -\hat{R}_{\hat{b}, \hat{\mu}} + \epsilon \Big) \nonumber \\
&\leq P\Big( | \ell_{\hat{b}^{*},\hat{\mu}^{*}}- \hat{R}_{\hat{b}^{*},\hat{\mu}^{*}}|  \geq \epsilon /2 \Big) 
+ P\Big(| \ell_{\hat{b},\hat{\mu}} - \hat{R}_{\hat{b}, \hat{\mu}}| \geq \epsilon/2 \Big). 
\end{align}

Now, since 
$$| \ell_{\hat{b}^{*},\hat{\mu}^{*}}- \hat{R}_{\hat{b}^{*},\hat{\mu}^{*}}| \leq \sup\limits_{(b,\mu) \in \Lambda} |\ell_{b,\mu}-\hat{R}_{b, \mu} |$$
and 
$$| \ell_{\hat{b},\hat{\mu}}-\hat{R}_{\hat{b}, \hat{\mu}}| \leq \sup\limits_{(b,\mu) \in \Lambda} |\ell_{b,\mu}-\hat{R}_{b, \mu} |,$$
then it follows that \eqref{optimality_theorem_proof_0} is bounded above by 
\begin{align}
2P\Big(\sup\limits_{(b,\mu) \in \Lambda} |\ell_{b,\mu}-\hat{R}_{b, \mu} | \geq \epsilon/2 \Big).\nonumber
\end{align}

By Markov's inequality, we obtain 
\begin{align}
\label{optimality_theorem_proof_12}
P\big(\ell_{\hat{b}^{*},\hat{\mu}^{*} } \geq \ell_{\hat{b},\hat{\mu}} + \epsilon\big) &\leq \frac{4}{\epsilon} E \Big(\sup\limits_{(b,\mu) \in \Lambda} |\ell_{b,\mu}-\hat{R}_{b, \mu} | \Big).
\end{align}
By applying Theorem \ref{uniform_URE} to \eqref{optimality_theorem_proof_12}, we obtain \eqref{optimaility_result_1}.

To prove \eqref{optimaility_result_2}, we have
\begin{align}
\label{optimality_theorem_proof_2}
\ell_{\hat{b}^{*},\hat{\mu}^{*} } - \ell_{\hat{b},\hat{\mu}} &= \big(\ell_{\hat{b}^{*},\hat{\mu}^{*} } - \hat{R}_{\hat{b}^{*},\hat{\mu}^{*}}\big) + \big(\hat{R}_{\hat{b}^{*},\hat{\mu}^{*}}  - \hat{R}_{\hat{b}, \hat{\mu}}\big) + \big( \hat{R}_{\hat{b}, \hat{\mu}}-\ell_{\hat{b},\hat{\mu}} \big), 
\end{align}
where we add and subtract the terms $\hat{R}_{\hat{b}^{*},\hat{\mu}^{*}}$ and $\hat{R}_{\hat{b}, \hat{\mu}}$. Again, using the fact that $\hat{R}_{\hat{b}^{*},\hat{\mu}^{*}}  \leq  \hat{R}_{\hat{b}, \hat{\mu}}$ for any $\hat{b} \text{ and }\hat{\mu}$, we find that \eqref{optimality_theorem_proof_2} is bounded above by
\begin{align*}
\big(\ell_{\hat{b}^{*},\hat{\mu}^{*} } -\hat{R}_{\hat{b}^{*},\hat{\mu}^{*}}\big) + \big(\hat{R}_{\hat{b}, \hat{\mu}}-\ell_{\hat{b},\hat{\mu}} \big), 
\end{align*}
Hence, we obtain 
\begin{align}
\label{optimality_theorem_proof_3}
\ell_{\hat{b}^{*},\hat{\mu}^{*} } - \ell_{\hat{b},\hat{\mu}}  &\leq 2\sup_{(b,\mu) \in \Lambda} |\hat{R}_{b, \mu}- \ell_{b,\mu}|,
\end{align}
By taking expectations in \eqref{optimality_theorem_proof_3} and applying Theorem \ref{uniform_URE}, we obtain the desired result.
\end{proof}


\section{Shrinkage estimation toward the grand mean}
\label{sec_shrink_to_mean}

In the previous section, we consider the class of semi-parametric shrinkage estimators of the mean parameters $\theta_i$, $i=1,\ldots, n$, given by \eqref{estimator_mu}. That class of estimators shrinks each $Y_{ij}$ toward a location $\mu_j$ that is determined by solving an optimization problem; specifically, we minimize the unbiased estimator of the risk \eqref{URE} over the set $\Lambda$. Now, we consider the second class of shrinkage estimators of the mean parameter. We replace the given location $\mu_j \in \mathbb{R}$ with the mean vector $\bar{Y}_j = \frac{1}{n} \sum_{i=1}^{n} Y_{ij}$, $j=1,\ldots, p$.

The second class of shrinkage estimators is of the form
\begin{align}
\label{estimator_mean}
\hat{\theta}_{ij}^{b,\bar{Y}}=(1-b_i)Y_{ij}+b_i\bar{Y}_j,  
\end{align}
$i=1,\ldots, n, \; j=1, \ldots, p$. 

The squared error loss of the estimators in \eqref{estimator_mean} is 
\begin{align}
\label{loss_2}
\ell_{b}= \frac{1}{np} \sum_{i=1}^{n} \sum_{j=1}^{p} (\hat{\theta}_{ij}^{b,\bar{Y}}-\theta_{ij})^2,
\end{align}
and therefore we define its risk as 
\begin{align}
\label{risk_2}
R_{b}=E(\ell_{b}).
\end{align}

Again, we find an optimal choice of $b$ by minimizing an unbiased estimator of the risk \eqref{risk_2}. In Proposition \ref{proposition_unbiased_2}, we propose an unbiased estimator of the risk.

\begin{proposition}
\label{proposition_unbiased_2}
An unbiased estimator of the risk $R_{b}$ in \eqref{risk_2} is given by
\begin{align}
\label{AURE}
\hat{R}_b = \frac{1}{np} \sum_{i=1}^{n} \sum_{j=1}^{p} \bigg[b_i^2(Y_{ij} - \bar{Y}_j)^2 + \bigg(1-2\bigg(1-\frac{1}{n}\bigg)b_i\bigg)\frac{V(Y_{ij})}{\tau_{ij}+v_2}\bigg].
\end{align}
\end{proposition}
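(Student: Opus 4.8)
The plan is to mirror the two-pronged computation in the proof of Proposition \ref{proposition_unbiased}: first express the risk $R_b$ as a squared-bias term plus variance terms, then take the expectation of $\check R_b$ in \eqref{AURE} and check that the two agree. The one genuinely new feature, compared with the fixed-location case, is that the shrinkage target $\bar Y_j$ is itself a function of the data — in particular $Y_{ij}$ enters $\bar Y_j$ with weight $1/n$ — so the estimation error is no longer a sum of independent pieces and a cross-covariance term appears.

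First I would set $D_{ij}=Y_{ij}-\theta_{ij}$ and $\bar D_j=\frac1n\sum_{k=1}^n D_{kj}$, so that
$$
\hat\theta_{ij}^{b,\bar Y}-\theta_{ij}
= b_i\Big(\tfrac1n\textstyle\sum_{k=1}^n\theta_{kj}-\theta_{ij}\Big)+(1-b_i)D_{ij}+b_i\bar D_j .
$$
The first summand is deterministic while the remaining two have mean zero, so $E[(\hat\theta_{ij}^{b,\bar Y}-\theta_{ij})^2]$ splits into the squared bias $b_i^2\big(\tfrac1n\sum_k\theta_{kj}-\theta_{ij}\big)^2$ plus $E[((1-b_i)D_{ij}+b_i\bar D_j)^2]$. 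Expanding the latter and using the mutual independence of $Y_1,\dots,Y_n$, which yields $E(D_{ij}\bar D_j)=\Var(Y_{ij})/n$ and $E(\bar D_j^2)=\frac1{n^2}\sum_k\Var(Y_{kj})$, produces the coefficient $(1-b_i)^2+2b_i(1-b_i)/n$ on $\Var(Y_{ij})$ together with a $\frac{b_i^2}{n^2}\sum_k\Var(Y_{kj})$ contribution.

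On the estimator side I would take expectations termwise in \eqref{AURE}. The second group of terms is handled exactly as in \eqref{proposition_proof_0}: the identity $E\big(V(Y_{ij})/(\tau_{ij}+\nu_2)\big)=\Var(Y_{ij})$ proved there applies verbatim. For the first group I need $E[(Y_{ij}-\bar Y_j)^2]$, which by the same decomposition equals $\big(\theta_{ij}-\tfrac1n\sum_k\theta_{kj}\big)^2+(1-2/n)\Var(Y_{ij})+\frac1{n^2}\sum_k\Var(Y_{kj})$. Substituting into $E(\check R_b)$ and collecting the coefficients of $\Var(Y_{ij})$ gives $b_i^2(1-2/n)+1-2(1-1/n)b_i=(1-b_i)^2+2b_i(1-b_i)/n$, precisely the coefficient appearing in $R_b$, while the squared-bias and $\frac1{n^2}\sum_k\Var(Y_{kj})$ terms match directly.

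The only delicate point — and the reason for the correction factor $(1-1/n)$ multiplying $b_i$ in \eqref{AURE}, in place of the plain $b_i$ of \eqref{URE} — is the bookkeeping of the $1/n$ cross-covariance between $Y_{ij}$ and $\bar Y_j$. That factor contributes an extra $+2b_i\Var(Y_{ij})/n$ relative to the fixed-location estimator, and this is exactly what is needed to combine with the $-2b_i^2\Var(Y_{ij})/n$ arising from $E[(Y_{ij}-\bar Y_j)^2]$ to reproduce the correct coefficient $2b_i(1-b_i)/n$ of the true risk. Tracking these $O(1/n)$ terms so that they cancel correctly is the main obstacle; once it is verified, the equality $E(\check R_b)=R_b$ follows just as in Proposition \ref{proposition_unbiased}.
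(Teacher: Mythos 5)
Your proof is correct, and it reaches the paper's conclusion by a slightly different decomposition, so a brief comparison is worthwhile. The overall strategy is the same — compute $E(\check{R}_b)$ and $R_b$ separately and verify equality, reusing the identity $E\big(V(Y_{ij})/(\tau_{ij}+\nu_2)\big)=\Var(Y_{ij})$ from the proof of Proposition \ref{proposition_unbiased}, which indeed applies verbatim — but the bookkeeping differs. You center the data, split the estimation error into the deterministic bias $b_i\big(\tfrac{1}{n}\sum_k\theta_{kj}-\theta_{ij}\big)$ plus the mean-zero combination $(1-b_i)D_{ij}+b_i\bar{D}_j$, fully expand both $R_b$ and $E[(Y_{ij}-\bar{Y}_j)^2]$ in terms of $\Var(Y_{ij})$, $\tfrac{1}{n^2}\sum_k\Var(Y_{kj})$, and the squared bias, and then match coefficients; the cross-covariance $E(D_{ij}\bar{D}_j)=\Var(Y_{ij})/n$ is read off directly from independence. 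The paper instead uses the two-term split $b_i(\bar{Y}_j-Y_{ij})+(Y_{ij}-\theta_{ij})$ and never evaluates $E(Y_{ij}-\bar{Y}_j)^2$ at all: that quantity appears with coefficient $b_i^2$ in both $E(\check{R}_b)$ (eq.~\eqref{proposition2_proof_1}) and $R_b$ (eq.~\eqref{proposition2_risk_2}) and cancels identically, so only the cross term $E\big((Y_{ij}-\theta_{ij})(\bar{Y}_jb_i-Y_{ij}b_i)\big)$ must be computed, which the paper does by splitting the double sum into $i=k$ and $i\neq k$ parts (eqs.~\eqref{proposition2_risk_4_1} and \eqref{proposition2_risk_5}). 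Your route is algebraically a bit longer but more transparent about the mechanism: it exhibits exactly why the correction factor $(1-1/n)$ is needed, namely that the $+2b_i\Var(Y_{ij})/n$ cross-covariance combines with the $-2b_i^2\Var(Y_{ij})/n$ arising from $E[(Y_{ij}-\bar{Y}_j)^2]$ to reproduce the risk coefficient $(1-b_i)^2+2b_i(1-b_i)/n$ — a check you carry out correctly — whereas the paper's cancellation of the unevaluated common term is shorter but leaves that mechanism implicit. Both arguments ultimately rest on the same two facts: mutual independence of $Y_1,\ldots,Y_n$ and the variance identity for $V(Y_{ij})$.
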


\begin{proof}
Taking the expectation of the estimator in \eqref{AURE} and using \eqref{proposition_proof_0}, we obtain
\begin{align}
\label{proposition2_proof_1}
E(\hat{R}_b)&=\frac{1}{np} \sum_{i=1}^{n} \sum_{j=1}^{p} \bigg[b_i^2 E(Y_{ij} - \bar{Y}_j)^2 +
\bigg(1-2\bigg(1-\frac{1}{n}\bigg)b_i\bigg)\Var(Y_{ij})\bigg] \nonumber \\
&=\frac{1}{np} \sum_{i=1}^{n} \sum_{j=1}^{p} \big[b_i^2 E(Y_{ij} - \bar{Y}_j)^2 +
\Var(Y_{ij})- 2b_i\Var(Y_{ij}) \nonumber \\ 
&\qquad \qquad \qquad \qquad\qquad\qquad\qquad + \frac{2}{n} b_i E(Y_{ij}^2) - \frac{2}{n} b_i[E(Y_{ij})]^2 \big].
\end{align}
Now, note that
\begin{align}
\label{proposition2_risk_1}
R_{b}= \frac{1}{np} \sum_{i=1}^{n} \sum_{j=1}^{p} E(\hat{\theta}_{ij}^{b,\bar{Y}}-\theta_{ij})^2 &= \frac{1}{np} \sum_{i=1}^{n} \sum_{j=1}^{p} E\big( b_i\bar{Y}_j + (1-b_i)Y_{ij} - \theta_{ij}\big)^2 \nonumber \\
&= \frac{1}{np} \sum_{i=1}^{n} \sum_{j=1}^{p} E\big(b_i\bar{Y}_j +Y_{ij}-b_iY_{ij} - \theta_{ij}\big)^2  
\end{align}
By rearranging terms and applying the elementary identity, $(x+y)^2\equiv x^2+y^2+2xy$ for $x,y \in \mathbb{R}$, we derive \eqref{proposition2_risk_1} in the form
\begin{align}
\label{proposition2_risk_2}
R_{b}&= \frac{1}{np}\sum_{i=1}^{n} \sum_{j=1}^{p} \Big[ b_{i}^2E(Y_{ij} - \bar{Y}_{j})^2 +E(\theta_{ij}-Y_{ij})^2 + 2E\big((Y_{ij}-\theta_{ij})(\bar{Y}_{j}b_{i}-Y_{ij}b_i)\big)  \Big] \nonumber \\
&= \frac{1}{np}\sum_{i=1}^{n} \sum_{j=1}^{p} \big[ b_{i}^2E(Y_{ij}- \bar{Y}_{j})^2     +\Var(Y_{ij}) + 2E\big((Y_{ij}-\theta_{ij})(\bar{Y}_{j}b_{i}-Y_{ij}b_i)\big)  \big].
\end{align}
Expanding the last term of equation \eqref{proposition2_risk_2}, we obtain
\begin{align}
\label{proposition2_risk_3}
\sum_{i=1}^{n} \sum_{j=1}^{p} & E\big((Y_{ij}-\theta_{ij})(\bar{Y}_{j}b_{i}-Y_{ij}b_i)\big) \nonumber\\&= \sum_{i=1}^{n} \sum_{j=1}^{p} E\big(Y_{ij}\bar{Y}_{j}b_{i} - Y_{ij}Y_{ij}b_i - \theta_{ij}\bar{Y}_{j}b_{i} + \theta_{ij}Y_{ij}b_i\big) \nonumber\\
&=\sum_{i=1}^{n} \sum_{j=1}^{p} \big[E\big(Y_{ij}\bar{Y}_{j}b_{i} - \theta_{ij}\bar{Y}_{j}b_{i}) - b_i\Var(Y_{ij})\big].
\end{align}
We further expand the first two terms of the equation \eqref{proposition2_risk_3}. We split the first term into two sums for $i=k$ and $i \neq k$, thus
\begin{align*}
\sum_{i=1}^{n} \sum_{j=1}^{p} E\big(Y_{ij}\bar{Y}_{j}b_{i}\big)
&=\frac{1}{n} \sum_{j=1}^{p} \sum_{i=1}^{n} \sum_{k=1}^{n} b_{i} E(Y_{ij}Y_{kj}) \nonumber\\
&=\frac{1}{n} \sum_{j=1}^{p} \sum_{i=1}^{n} b_{i} E(Y_{ij}^2) +\frac{1}{n} \sum_{j=1}^{p} \sum_{i \neq k} b_{i} E(Y_{ij}Y_{kj}),
\end{align*}
and since $Y_{ij}$ are mutually independent for all $i$, we get 
\begin{align}
\label{proposition2_risk_4_1}
\sum_{i=1}^{n} \sum_{j=1}^{p} E\big(Y_{ij}\bar{Y}_{j}b_{i}\big)
&=\frac{1}{n} \sum_{j=1}^{p} \sum_{i=1}^{n} b_{i} E(Y_{ij}^2) +\frac{1}{n} \sum_{j=1}^{p} \sum_{i \neq k} b_{i} E(Y_{ij})E(Y_{kj}).
\end{align}
By using similar arguments, the second term in \eqref{proposition2_risk_3} becomes
\begin{align}
\label{proposition2_risk_5}
\sum_{i=1}^{n} \sum_{j=1}^{p} E\big(\theta_{ij}\bar{Y}_{j}b_{i}\big)
&=\frac{1}{n} \sum_{j=1}^{p} \sum_{i=1}^{n} \sum_{k=1}^{n} b_{i} E(Y_{ij})E(Y_{kj}) \nonumber \\ 
&=\frac{1}{n} \sum_{j=1}^{p} \sum_{i=1}^{n} b_{i} [E(Y_{ij})]^2 +\frac{1}{n} \sum_{j=1}^{p} \sum_{i \neq k} b_{i} E(Y_{ij})E(Y_{kj}).
\end{align}
Substituting \eqref{proposition2_risk_4_1} and \eqref{proposition2_risk_5} in \eqref{proposition2_risk_3}, it follows from \eqref{proposition2_proof_1} and \eqref{proposition2_risk_2} that $E(\hat{R}_b)=R_{b} $ and thus the estimator in \eqref{AURE} is an unbiased estimator of the risk in \eqref{risk_2}. 
\end{proof}


We are interested in finding $\hat{b}^{*}$ that minimizes \eqref{AURE}  over the set 
\begin{align*}
\Psi = \Big\{ b: b_i \in [0,1] 
\Big\},
\end{align*}
and the shrinkage estimator in \eqref{estimator_mean} becomes 
\begin{align}
\label{estimator_mean_AURE}
\hat{\theta}_{i}^{\hat{b}^{*}, \bar{Y}}=(1-\hat{b}_{i}^{*})Y_i+\hat{b}_i^{*} \bar{Y},
\end{align}
$i= 1, \ldots,n$.

Next, we present our main results for the second class of shrinkage estimators given by \eqref{estimator_mean_AURE}.  In Theorem \ref{uniform_AURE}, we show that the estimator $\hat{R}_b$ is uniformly close to the actual loss $\ell_{b}$.

\begin{theorem}
\label{uniform_AURE}
Suppose that the assumptions \textnormal{\ref{A}-\ref{E}} hold. Then, as $n, p \rightarrow\infty$,
\begin{align}
\label{uniform_AURE_result}
E\Big(\sup_{b \in \Psi} |\hat{R}_b-\ell_{b}| \Big)= \textnormal{O}(n^{-1/2}+n^{-\alpha/2}p^{\beta/2}).
\end{align}
\end{theorem}
   
Further, we show that our proposed estimator is asymptotically optimal among a large class of shrinkage estimators.

\begin{theorem}
\label{theorem_optimality_AURE}
Suppose that the assumptions \textnormal{\ref{A}-\ref{E}} hold, and consider any shrinkage estimator of the form 
\begin{align}
\hat{\theta}_{i}^{\hat{b},\bar{Y}}=(1-\hat{b}_{i})Y_i+\hat{b}_i \bar{Y}, \nonumber
\end{align}
$i=1,\ldots, n,$, where $\hat{b} \in \Psi$. Then, as $n, p \rightarrow \infty$, 
\begin{align}
\ell_{\hat{b}^ * } \leq \ell_{\hat{b}} + \textnormal{O}_{\mathbb{P}}(n^{-1/2}+n^{-\alpha/2}p^{\beta/2}), \nonumber
\end{align}
and
\begin{align}
\limsup\limits_{n\rightarrow\infty} (R_{\hat{b}^* } - R_{\hat{b}})\leq 0.\nonumber
\end{align}
\end{theorem}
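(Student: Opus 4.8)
The plan is to mirror the proof of Theorem \ref{theorem_optimality}, replacing the unbiased risk estimator $\hat{R}_{b,\mu}$, the feasible set $\Lambda$, and Theorem \ref{uniform_URE} by their grand-mean counterparts $\check{R}_b$, $\Psi$, and Theorem \ref{uniform_AURE}. The single structural fact I rely on is that $\check{b}^*$ is, by construction, a minimizer of $\check{R}_b$ over $\Psi$, so that $\check{R}_{\check{b}^*} \leq \check{R}_{\check{b}}$ for every competitor $\check{b} \in \Psi$; this plays the same role here that $\hat{R}_{\hat{b}^*,\hat{\mu}^*} \leq \hat{R}_{\hat{b},\hat{\mu}}$ played before.

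First I would establish the probabilistic inequality. For $\epsilon > 0$, inserting the estimated-risk terms and using $\check{R}_{\check{b}^*} \leq \check{R}_{\check{b}}$ gives
\begin{align*}
P\big(\ell_{\check{b}^*} \geq \ell_{\check{b}} + \epsilon\big)
&\leq P\big(\ell_{\check{b}^*} - \check{R}_{\check{b}^*} \geq \ell_{\check{b}} - \check{R}_{\check{b}} + \epsilon\big).
\end{align*}
Bounding each of $|\ell_{\check{b}^*} - \check{R}_{\check{b}^*}|$ and $|\ell_{\check{b}} - \check{R}_{\check{b}}|$ by $\sup_{b\in\Psi}|\ell_b - \check{R}_b|$, splitting the event at level $\epsilon/2$, and applying Markov's inequality yields
\begin{align*}
P\big(\ell_{\check{b}^*} \geq \ell_{\check{b}} + \epsilon\big) \leq \frac{4}{\epsilon}\, E\Big(\sup_{b\in\Psi}|\ell_b - \check{R}_b|\Big).
\end{align*}
Theorem \ref{uniform_AURE} bounds the expectation on the right by $\textnormal{O}(n^{-1/2}+n^{-\alpha/2}p^{\beta/2})$; taking $\epsilon$ to be a large multiple of this rate converts the tail bound into the stochastic-order statement $\ell_{\check{b}^*} \leq \ell_{\check{b}} + \textnormal{O}_{\mathbb{P}}(n^{-1/2}+n^{-\alpha/2}p^{\beta/2})$, which is the first conclusion.

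For the risk comparison I would decompose, exactly as in \eqref{optimality_theorem_proof_2},
\begin{align*}
\ell_{\check{b}^*} - \ell_{\check{b}} = \big(\ell_{\check{b}^*} - \check{R}_{\check{b}^*}\big) + \big(\check{R}_{\check{b}^*} - \check{R}_{\check{b}}\big) + \big(\check{R}_{\check{b}} - \ell_{\check{b}}\big),
\end{align*}
and once more use $\check{R}_{\check{b}^*} \leq \check{R}_{\check{b}}$ to discard the (nonpositive) middle term, obtaining $\ell_{\check{b}^*} - \ell_{\check{b}} \leq 2\sup_{b\in\Psi}|\check{R}_b - \ell_b|$. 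Taking expectations and recalling $R_b = E(\ell_b)$ from \eqref{risk_2}, Theorem \ref{uniform_AURE} gives $R_{\check{b}^*} - R_{\check{b}} \leq 2\,E\big(\sup_{b\in\Psi}|\check{R}_b - \ell_b|\big) = \textnormal{O}(n^{-1/2}+n^{-\alpha/2}p^{\beta/2}) \to 0$, whence the $\limsup$ is nonpositive.

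Since all the analytic effort — the uniform concentration of $\check{R}_b$ around $\ell_b$ over $\Psi$ — is already packaged in Theorem \ref{uniform_AURE}, I do not expect a genuine obstacle. The only point demanding care is the insertion step in the probability bound: one must verify that passing from the loss to the estimated risk is legitimate, namely that $\check{R}_{\check{b}^*} \leq \check{R}_{\check{b}}$ holds for the arbitrary competitor $\check{b} \in \Psi$, which is precisely the defining property of the minimizer $\check{b}^*$.
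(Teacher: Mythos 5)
Your proposal is correct and follows essentially the same route as the paper: the paper's proof of this theorem consists precisely of the remark that, with Theorem \ref{uniform_AURE} in hand, one repeats the argument of Theorem \ref{theorem_optimality} verbatim with $\hat{R}_{b,\mu}$, $\Lambda$ replaced by $\check{R}_b$, $\Psi$, which is exactly what you carry out, including the key use of $\check{R}_{\check{b}^*}\leq \check{R}_{\check{b}}$ and the three-term decomposition. Your version simply spells out the details the paper leaves implicit.
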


\medskip

The proofs of Theorems \ref{uniform_AURE} and \ref{theorem_optimality_AURE} are presented at the end of this section. Let us define the quantities
\begin{align*}
&T_4= \frac{2}{n^2p}\sum_{i=1}^{n} \sum_{j=1}^{p} 
(Y_{ij}-\theta_{ij})^2 ,\\
&T_5= \frac{2}{np} \sup_{(b,\mu) \in \Psi} \sum_{j=1}^{p} |\bar{Y}_j| \bigg|\sum_{j=1}^{p} b_i(Y_{ij} - \theta_{ij}) \bigg|.
\end{align*}
In the following result, which is needed to prove Theorem \ref{uniform_AURE} and \ref{theorem_optimality_AURE}, we establish the convergence properties of $T_4$ and $T_5$.

\begin{lemma}
\label{lemma_T4_T5}
Suppose that the assumptions \textnormal{\ref{A}-\ref{E}} hold.  Then, as $n, p \rightarrow\infty$, we have $E(|T_4|)=\textnormal{O}(n^{-1/2}) \text{ and } E(|T_5|)=\textnormal{O}(n^{-\alpha/2}p^{\beta/2})$.
\end{lemma}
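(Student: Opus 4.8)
The plan is to treat the two quantities separately, since $T_4$ is elementary while $T_5$ carries essentially the whole burden of the lemma. For $T_4$, note that it is nonnegative, so $|T_4| = T_4$ and, using $\theta_{ij} = E(Y_{ij})$,
$E(|T_4|) = \frac{2}{n^2 p}\sum_{i=1}^{n}\sum_{j=1}^{p} E(Y_{ij}-\theta_{ij})^2 = \frac{2}{n^2 p}\sum_{i=1}^{n}\sum_{j=1}^{p}\Var(Y_{ij})$.
Writing this as $\frac{2}{n}\cdot\frac{1}{np}\sum_{i,j}\Var(Y_{ij})$ and invoking assumption \ref{A}, which bounds the inner average, I obtain $E(|T_4|) = \textnormal{O}(n^{-1})$, which is in particular $\textnormal{O}(n^{-1/2})$ as required.

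The substance is in $T_5$. First I would bound $|\bar{Y}_j|\le M := \max_{i,l}|Y_{il}|$ and push the supremum inside the sum over $j$, giving $T_5 \le \frac{2}{np}\,M\sum_{j=1}^{p} G_j$, where $G_j := \sup_{b\in\Psi}\big|\sum_{i=1}^{n} b_i(Y_{ij}-\theta_{ij})\big|$. The crucial observation is that the constraint defining $\Psi$ forces $b$ to be monotone with respect to the ordering of the $\tau_{i\boldsymbol{\cdot}}$, so the linear functional $b\mapsto\sum_i b_i(Y_{ij}-\theta_{ij})$ is maximized over the polytope $\Psi$ at a vertex, and the vertices are the $0/1$ ``suffix'' vectors. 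Hence $G_j$ equals a maximum of partial sums of the independent, mean-zero variables $Y_{ij}-\theta_{ij}$ (read off in the $\tau$-sorted order). This is exactly the setting of Kolmogorov's (Doob's) maximal inequality, which yields $E(G_j^2)\le 4\sum_{i=1}^{n}\Var(Y_{ij})$.

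With this bound in hand, the remainder is two applications of the Cauchy--Schwarz inequality. First, $\big(\sum_j G_j\big)^2\le p\sum_j G_j^2$, so $E\big[\big(\sum_j G_j\big)^2\big]\le 4p\sum_{i,j}\Var(Y_{ij}) = \textnormal{O}(np^2)$ by assumption \ref{A}. Second, $E\big(M\sum_j G_j\big)\le (E M^2)^{1/2}\big(E[(\sum_j G_j)^2]\big)^{1/2}$. Assumption \ref{E} gives $E(M^2) = E(\max_{i,j}Y_{ij}^2) = \textnormal{O}(n^{1-\alpha}p^\beta)$, so combining the two factors,
$E(|T_5|)\le\frac{2}{np}\,\textnormal{O}(n^{(1-\alpha)/2}p^{\beta/2})\cdot\textnormal{O}(n^{1/2}p) = \textnormal{O}(n^{-\alpha/2}p^{\beta/2})$,
as claimed.

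I expect the main obstacle to be the supremum over $b$ in $T_5$: the naive bound $\sup_{b}\big|\sum_i b_i(Y_{ij}-\theta_{ij})\big|\le\sum_i|Y_{ij}-\theta_{ij}|$ destroys the cancellation among the independent centered summands and inflates the second-moment bound by a factor of $n$, producing the useless rate $\textnormal{O}(n^{(1-\alpha)/2}p^{\beta/2})$. Recognizing that the monotonicity constraint in $\Psi$ reduces the supremum to a maximum of partial sums, so that a maximal inequality recovers the correct $\big(\sum_i\Var(Y_{ij})\big)^{1/2}$ scaling, is the one genuinely necessary idea; everything else is bookkeeping with Cauchy--Schwarz together with assumptions \ref{A} and \ref{E}.
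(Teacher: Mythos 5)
Your proof is correct and follows essentially the same route as the paper's: the direct variance computation with assumption \ref{A} for $T_4$, and for $T_5$ the bound $|\bar{Y}_j|\leq\max_{i,l}|Y_{il}|$, reduction of the supremum over the monotone polytope $\Psi$ to a maximum of partial sums handled by Doob's $L^2$ maximal inequality, and Cauchy--Schwarz combined with assumptions \ref{A} and \ref{E}. The only cosmetic difference is that you apply Cauchy--Schwarz once globally to $E\big(M\sum_j G_j\big)$ and bound $E\big[(\sum_j G_j)^2\big]\leq p\sum_j E(G_j^2)$, whereas the paper applies Cauchy--Schwarz for each $j$ and then uses Jensen's inequality on the sum over $j$ --- the two computations yield the identical bound.
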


\begin{proof}
In bounding $T_4$, we obtain
\begin{align*}
E\bigg( \frac{2}{n^2p} \sum_{i=1}^{n} \sum_{j=1}^{p} (Y_{ij}-\theta_{ij})^2 \bigg) &= \frac{2}{n^2p} \sum_{i=1}^{n} \sum_{j=1}^{p} E(Y_{ij}-\theta_{ij})^2 
=\frac{2}{n^2p} \sum_{i=1}^{n} \sum_{j=1}^{p} \Var(Y_{ij}).
\end{align*}
Under the regularity condition \ref{A}, it follows that, as $n, p \rightarrow \infty$, $E(|T_4|)= O(n^{-1/2})$.

In bounding $T_5$, as in the proof of Lemma \ref{lemma_T1_T2_T3}, we have 
\begin{align*}
 \frac{2}{np} E \bigg( \sup_{(b,\mu) \in \Psi} & \sum_{j=1}^{p}\big|\bar{Y}_j\big|\bigg| \sum_{i=1}^{n} b_i(Y_{ij}-\theta_{ij}) \bigg| \bigg)\\
&\leq \frac{2}{np} \sum_{j=1}^{p} E \bigg( \max\limits_{i,j}\big|Y_{ij}\big|\sup_{(b,\mu) \in \Psi}\bigg| \sum_{i=1}^{n} b_i(Y_{ij}-\theta_{ij}) \bigg| \bigg)\\
&\leq \frac{2}{np} \sum_{j=1}^{p} \bigg[ E \big( \max\limits_{i,j}Y_{ij}^2 \big) E \bigg( \sup_{(b,\mu) \in \Psi}\bigg| \sum_{i=1}^{n} b_i(Y_{ij}-\theta_{ij}) \bigg|^2 \bigg) \bigg]^{1/2},
\end{align*}
and thus $E(|T_5|)= O(n^{-\alpha/2}p^{\beta/2})$, which completes the proof.
\end{proof}

\medskip 

We now provide the proofs of Theorems \ref{uniform_AURE} and \ref{theorem_optimality_AURE}.

\medskip 

\begin{proof}[Proof of Theorem \ref{uniform_AURE}]  By using \eqref{estimator_mean}, \eqref{loss_2}, and \eqref{AURE}, we obtain
\begin{align}
\label{uniform_AURE_proof_1}
\hat{R}_b-\ell_{b} &=\frac{1}{np} \sum_{i=1}^{n} \sum_{j=1}^{p} 
\bigg(1-2\bigg(1-\frac{1}{n}\bigg)b_i\bigg)\bigg[ \frac{V(Y_{ij})}{\tau_{ij}+v_2}-(Y_{ij}-\theta_{ij})^2 \bigg] \nonumber\\
&\quad + \frac{2}{np} \sum_{i=1}^{n} \sum_{j=1}^{p} 
b_i\bigg(\theta_{ij}(Y_{ij}-\theta_{ij})+ \frac{1}{n}(Y_{ij}-\theta_{ij})^2-(Y_{ij} - \theta_{ij})\bar{Y}_j \bigg).
\end{align}

\noindent Taking the supremum over $\Psi$ of the absolute value of the eq. \eqref{uniform_AURE_proof_1} , we have
\begin{align*}
\sup_{(b,\mu) \in \Psi}| \hat{R}_b-\ell_{b}|
&\leq  T_1+\bigg(1-\frac{1}{n}\bigg)T_2+T_{31}+ T_4+  T_5.
\end{align*}
By taking expectations on both sides of the above expression and applying Lemmas \ref{lemma_T1_T2_T3} and \ref{lemma_T4_T5}, it follows that \eqref{uniform_AURE_result} holds. 
\end{proof}

\smallskip 

\begin{proof}[Proof of Theorem \ref{theorem_optimality_AURE}]  With Theorem \ref{uniform_AURE} established, the proof of Theorem \ref{theorem_optimality_AURE} is almost identical to that of Theorem \ref{theorem_optimality}. 
\end{proof}


\section{Shrinkage estimation for the diagonal multivariate natural exponential families}
\label{sec_mnef}

In this section, we focus on the diagonal multivariate natural exponential families and simplify conditions \ref{A}-\ref{E} for those families. \cite{bar1994diagonal} showed that there are six irreducible, diagonal natural exponential families in $\mathbb{R}^p$. These families are the familiar multivariate normal, Poisson, gamma, multinomial, and negative multinomial distributions, and an additional exceptional family called the hybrid distributions. In this section, we focus on the first five of these families.

In simplifying conditions \ref{A}-\ref{E}, we modify and apply a result of \cite[Lemma A.1, p.~593]{xie2016optimal}.  In stating the following result, we recall that if a random variable $X$ has mean $\mu$ and variance $\sigma^2$ then the {\it skew} of $X$ is $E(X-\mu)^3/\sigma^3$.  We introduce the following assumptions:
\begin{enumerate}[label=(\Alph*)]\addtocounter{enumi}{5}
\item \label{F}$ \limsup\limits_{n, p \rightarrow \infty} \sum_{i=1}^{n} \sum_{j=1}^{p}|\theta_{ij}|^{2+\tilde{\epsilon}}/np<\infty$ for some $\tilde{\epsilon}>0$,
\item \label{G}$ \limsup\limits_{n, p \rightarrow \infty}\sum_{i=1}^{n}\sum_{j=1}^{p} (\Var(Y_{ij}))^2/np < \infty$,
\item \label{H}$\sup\limits_{i} \textnormal{skew}(Y_{ij}) = \sup\limits_{i} \Big( \big(\nu_1+2\nu_2 \theta_{ij})/\big( \tau^{1/2}_{ij} \, (\nu_0+\nu_1\theta_{ij}+\nu_2\theta_{ij}^2)^{1/2}\big) \Big)< \infty$ for all  $j$,
\end{enumerate}

\begin{lemma}
\label{sufficient_conditions_E}
Let $Y_1, \ldots, Y_n$ be mutually independent random vectors with $Y_{ij}$ coming from one of the five natural exponential families with quadratic variance functions. Then conditions \textnormal{\ref{B}} and \textnormal{\ref{F}-\ref{H}}, imply condition \textnormal{\ref{E}} if (i) $0<\tilde{\epsilon}<2$, $pn^{-\tilde{\epsilon}/2}\rightarrow 0$ as $n,p\rightarrow \infty$, or (ii) $\tilde{\epsilon}\ge 2$, $pn^{-1/3}\log^{4/3}n\rightarrow 0$ as $n,p\rightarrow\infty$.
\end{lemma}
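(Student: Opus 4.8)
The plan is to reduce condition \ref{E} to a bound on $E(\max_{i,j}Y_{ij}^2)$ and to show that this quantity is $\mathrm{o}(n)$ at the rate dictated by (i) or (ii). First I would split each observation into its mean and its fluctuation, $Y_{ij}=\theta_{ij}+(Y_{ij}-\theta_{ij})$, and use $(a+b)^2\le 2a^2+2b^2$ to obtain
\[
\max_{i,j}Y_{ij}^2\le 2\max_{i,j}\theta_{ij}^2+2\max_{i,j}(Y_{ij}-\theta_{ij})^2 .
\]
The first term is deterministic and is governed entirely by assumption \ref{F}; the second is the genuinely stochastic part, and it is there that the structure of the diagonal natural exponential families enters. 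I would bound the two pieces separately and then recombine, setting $\alpha$ and $\beta$ at the end.

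For the mean term I would use the elementary inequality $\max_{i,j}|\theta_{ij}|\le\big(\sum_{i,j}|\theta_{ij}|^{2+\tilde{\epsilon}}\big)^{1/(2+\tilde{\epsilon})}$, so that \ref{F} gives $\max_{i,j}\theta_{ij}^2=\mathrm{O}\big((np)^{2/(2+\tilde{\epsilon})}\big)$. Dividing by $n$ yields a contribution of order $n^{\gamma-1}p^{\gamma}$ with $\gamma=2/(2+\tilde{\epsilon})<1$, and a direct computation shows that $n^{\gamma-1}p^{\gamma}\to 0$ if and only if $pn^{-\tilde{\epsilon}/2}\to 0$. This already accounts for the growth condition in case (i); when $\tilde{\epsilon}\ge 2$ it imposes only $p=\mathrm{o}(n)$, so in case (ii) the mean term is dominated by the fluctuation term.

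For the fluctuation term I would argue by truncation: for any level $M>0$,
\[
E\Big(\max_{i,j}(Y_{ij}-\theta_{ij})^2\Big)\le M^2+\sum_{i,j}E\big[(Y_{ij}-\theta_{ij})^2;\,|Y_{ij}-\theta_{ij}|>M\big].
\]
The inputs are averaged bounds on the central moments of these variables. Since for a quadratic variance function every central moment is a polynomial in $\theta_{ij}$ built from $V$ and its derivatives, divided by powers of $\tau_{ij}$ (the moment recursion of \cite{morris1982natural}), assumptions \ref{B}, \ref{G} and \ref{H} together yield $\sum_{i,j}E(Y_{ij}-\theta_{ij})^k=\mathrm{O}(np)$ through fourth order, with \ref{H} controlling the third-moment (skewness) scale and \ref{G}, \ref{B} the fourth-moment scale. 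In the regime $\tilde{\epsilon}<2$ I would bound the tail term crudely by $M^{-\tilde{\epsilon}}\sum_{i,j}E|Y_{ij}-\theta_{ij}|^{2+\tilde{\epsilon}}=\mathrm{O}\big(np\,M^{-\tilde{\epsilon}}\big)$ and optimise $M\sim(np)^{1/(2+\tilde{\epsilon})}$, which reproduces the rate $(np)^{2/(2+\tilde{\epsilon})}$ and hence the same condition $pn^{-\tilde{\epsilon}/2}\to0$ as the mean term; this completes case (i). In the regime $\tilde{\epsilon}\ge 2$ enough moments are available to invoke the exponential (Bernstein-type) tail bound that these families enjoy because their cumulant-generating function is finite near the origin. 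Controlling the relevant sub-exponential scales only on average through \ref{G} and \ref{H}, the optimal truncation level now carries a logarithmic factor, and carrying the optimisation through gives a bound of the form $\mathrm{O}\big((np)^{2/3}(\log np)^{4/3}\big)$; after dividing by $n$ this tends to $0$ under the condition $pn^{-1/3}\log^{4/3}n\to0$ of case (ii), and choosing $\alpha$ slightly below $1/3$ together with $\beta=2/3$ then exhibits the pure-power rate required by \ref{E}.

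The main obstacle is this fluctuation bound in case (ii). Because the laws $F_i$ and their dispersion scales depend on $\theta_{ij}$ and $\tau_{ij}$, the higher central moments and the sub-exponential constants are not uniformly bounded and can be controlled only on average through \ref{B}, \ref{G} and \ref{H}; the delicate steps are to pass from these averaged cumulant bounds to a usable tail bound for the sum over all $np$ indices, and then to balance the competing terms $M^2$ and the tail remainder at the precise exponent producing $n^{-1/3}$ together with the logarithmic factor. This is exactly the point at which I would adapt the univariate argument of \cite[Lemma~A.1]{xie2016optimal} to the present diagonal multivariate setting, taking care that the maximum over the $p$ coordinates—for which only coordinatewise averages are assumed—enters the bound correctly.
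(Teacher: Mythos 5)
Your overall route coincides with the paper's: both split $Y_{ij}$ into mean plus fluctuation, handle $\max_{i,j}\theta_{ij}^2=\mathrm{O}\big((np)^{2/(2+\tilde{\epsilon})}\big)$ via \ref{F} exactly as you do, and reduce the fluctuation to the NEF--QVF tail machinery of \cite[Lemma A.1]{xie2016optimal}. The paper implements the fluctuation step differently: it writes $Y_{ij}=\sigma_{ij}Z_{ij}+\theta_{ij}$, bounds $\max_{i,j}\sigma_{ij}^2=\mathrm{O}\big((np)^{1/2}\big)$ from \ref{G}, controls the cross term via \ref{B}, and applies the univariate Xie--Kou--Brown lemma \emph{coordinatewise}, summing over $j$ (whence an extra factor $p$), arriving at $E\big(\max_{i,j}Y_{ij}^2\big)=\mathrm{O}\big((np)^{1/2}p\log^2 n+(np)^{2/(2+\tilde{\epsilon})}+(np)^{1/2}p\log n\big)$. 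Your case (i) is fine, and arguably cleaner than the paper's treatment: the truncation at $M\sim(np)^{1/(2+\tilde{\epsilon})}$ works once $\sum_{i,j}E|Y_{ij}-\theta_{ij}|^{2+\tilde{\epsilon}}=\mathrm{O}(np)$ is in hand, and for $0<\tilde{\epsilon}<2$ this does follow from \ref{G} and \ref{H} via the Morris recursion (the fourth central moment is $3V^2/\tau^2+(V^2V''+V(V')^2)/\tau^3$, with $(V')^2\le C\,\tau V$ supplied by \ref{H}) together with the interpolation $E|X|^{2+\tilde{\epsilon}}\le\big(EX^2\big)^{(2-\tilde{\epsilon})/2}\big(EX^4\big)^{\tilde{\epsilon}/2}$ and H\"older over the $np$ indices — though you only sketch this.

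The genuine gap is in case (ii). Your claimed intermediate bound $E\big(\max_{i,j}(Y_{ij}-\theta_{ij})^2\big)=\mathrm{O}\big((np)^{2/3}(\log np)^{4/3}\big)$ is asserted, not derived, and no Bernstein-type truncation produces these exponents: with sub-exponential tails of scale $c_{ij}\propto\sigma_{ij}$ controlled only on average through \ref{G} (so $\max_{i,j}c_{ij}^2=\mathrm{O}\big((np)^{1/2}\big)$), balancing $M^2$ against the tail sum forces $M\sim c_{\max}\log(np)$ and yields $\mathrm{O}\big((np)^{1/2}\log^2(np)\big)$ globally, or the paper's $\mathrm{O}\big((np)^{1/2}p\log^2 n\big)$ when the univariate lemma is applied per coordinate; the exponents $2/3$ and $4/3$ appear reverse-engineered from the target condition rather than obtained from any optimization. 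Indeed, the condition $pn^{-1/3}\log^{4/3}n\rightarrow 0$ arises in the paper not by deflating a $(np)^{2/3}\log^{4/3}$ bound but because $n^{-1}(np)^{1/2}p\log^2 n=n^{-1/2}p^{3/2}\log^2 n=\big(pn^{-1/3}\log^{4/3}n\big)^{3/2}$. Since your final conclusion would still follow from the bound the Xie--Kou--Brown argument actually delivers, the strategy is salvageable; but as written, the key quantitative step of case (ii) — exactly the step you defer to \emph{adapting} \cite[Lemma A.1]{xie2016optimal} — is missing, and the rate you state for it is unsubstantiated.
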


\begin{proof} The proof is similar to the proof of Lemma A.1 in \cite{xie2016optimal}, where $Y_{ij}=\sigma_{ij}Z_{ij}+\theta_{ij}$ with $\sigma_{ij}^2=\Var(Y_{ij})$ and $Z_{ij}$ are independent for all $i$ with mean zero and variance one. Since $Y_{ij}^2=\sigma_{ij}^2Z_{ij}^2+\theta_{ij}^2+2\sigma_{ij}\theta_{ij}Z_{ij}$, we get that
\begin{align*}
\max \limits_{i,j}Y_{ij}^2 = \max \limits_{i,j}\sigma_{ij}^2 \cdot \max \limits_{i,j}Z_{ij}^2 + \max \limits_{i,j}\theta_{ij}^2+2\max \limits_{i,j}\sigma_{ij}|\theta_{ij}| \cdot \max \limits_{i,j}|Z_{ij}|.
\end{align*}
Using the hypotheses of Lemma \ref{sufficient_conditions_E}, as well as Lemma A.1 in \cite{xie2016optimal}, we have
\begin{align*}
E(\max \limits_{i,j}Y_{ij}^2)&=\text{O}\big((np)^{1/2}p\log^2n+(np)^{2/(2+\tilde{\epsilon})}+(np)^{1/2}p\log n\big) \\
&= \begin{cases}
      \text{O}\big((np)^{2/(2+\tilde{\epsilon})}\big), & 0<\tilde{\epsilon} < 2 \\
      \text{O}\big((np)^{1/2}p\log^2n\big), & \tilde{\epsilon} \geq  2
\end{cases} .
\end{align*}
For $0<\tilde{\epsilon}<2$, we deduce that 
$$
\frac{1}{n} E(\max\limits_{i,j} Y_{ij}^2)=\text{O}(n^{-\tilde{\epsilon} / (2+\tilde{\epsilon})} p^{2/(2+\tilde{\epsilon})}).
$$
Therefore, for \ref{E} to hold, we require that $pn^{-\tilde{\epsilon}/2}\rightarrow 0$ as $n,p\rightarrow\infty$.
For $\tilde{\epsilon} \geq 2$, we deduce that 
$$
\frac{1}{n} E(\max\limits_{i,j} Y_{ij}^2)= \text{O}\big(n^{-1/2}p^{3/2} \log^2n\big),
$$
so that $pn^{-1/3}\log^{4/3}n\rightarrow 0$ implies \ref{E}.
\end{proof}

\medskip

For the five diagonal multivariate natural exponential families that we consider in this article, below we list the respective conditions, under which regularity conditions \ref{A}-\ref{E} are satisfied.

\begin{proposition} 
\label{proposition_simplifiedA-E}
For the normal distribution with $E(Y_{ij})=\theta_{ij}$ and $\textnormal{Var} (Y_{ij})= 1$, where $\theta_{ij} \in (-\infty,\infty)$, conditions \textnormal{\ref{A}-\ref{E}} reduce to: 
$\sum_{i=1}^{n}\sum_{j=1}^{p} \theta_{ij}^4 =\textnormal{O}(np).$

For the Poisson distribution with $E(Y_{ij})=\textnormal{Var}(Y_{ij})=\theta_{ij}$, where $\theta_{ij} >0$, conditions \textnormal{\ref{A}-\ref{E}} reduce to: 
$\sum_{i=1}^{n} \sum_{j=1}^{p} \theta_{ij}^3 =\textnormal{O}(np) \text{ and }\inf_{i} \theta_{ij}>0,$ 
$j=1, \ldots, p$.

For the gamma distribution with $E(Y_{ij})=\theta_{ij}$ and $\textnormal{Var}(Y_{ij})=\theta_{ij}^2/\lambda$, where $\theta_{ij}, \lambda > 0$, conditions \textnormal{\ref{A}-\ref{E}} reduce to: 
$\sum_{i=1}^{n} \sum_{j=1}^{p} \theta_{ij}^4 = \textnormal{O}(np).$ 

For the multinomial distribution with $E(Y_{ij})=\theta_{ij}$ and $\textnormal{Var}(Y_{ij}) = (\theta_{ij}-\theta_{ij}^2)/N_{i}$, where $\theta_{ij} \in (0,1)$ and $N_i \geq 2$, conditions \textnormal{\ref{A}-\ref{E}} reduce to: 
$\sum_{i=1}^{n} \sum_{j=1}^{p}\theta_{ij}^3 = \textnormal{O}(np)$ and $\inf_{i} \theta_{ij}>0,$ 
$j=1, \ldots, p$.

For the negative multinomial distribution with $E(Y_{ij})=\theta_{ij}$, $\textnormal{Var}(Y_{ij})=(\theta_{ij}+\theta_{ij}^2)/N_i$, $\theta_{ij}>0$, and $N_i \in \mathbb{N}$, conditions \ref{A}-\ref{E} reduce to: 
$\sum_{i=1}^{n}\sum_{j=1}^{p} \theta_{ij}^4 = \textnormal{O}(np)$ and $\inf\limits_{i} \theta_{ij}>0$, $j=1, \ldots, p$.
\end{proposition}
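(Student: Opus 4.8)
The plan is to treat the five families one at a time, in each case inserting the explicit constants $(\nu_0,\nu_1,\nu_2)$ and the weights $\tau_{ij}$ into $V$ and into $\Var(Y_{ij})=V(\theta_{ij})/\tau_{ij}$, and then reading off what each of \ref{A}--\ref{E} demands. The relevant parametrizations are $(\nu_0,\nu_1,\nu_2,\tau_{ij})=(1,0,0,1)$ for the normal, $(0,1,0,1)$ for Poisson, $(0,0,1,\lambda)$ for gamma, $(0,1,-1,N_i)$ for multinomial, and $(0,1,1,N_i)$ for negative multinomial. With these in hand, \ref{A} is automatic whenever $\Var(Y_{ij})$ is bounded (normal, multinomial) and is otherwise a bound on a low-order moment sum ($\frac1{np}\sum_{i,j}\theta_{ij}$ for Poisson, $\frac1{np}\sum_{i,j}\theta_{ij}^2$ for gamma and negative multinomial); while \ref{B}, being $\frac1{np}\sum_{i,j}\Var(Y_{ij})\theta_{ij}^2$, becomes a bound on $\sum_{i,j}\theta_{ij}^4$ when $\nu_2\neq0$ (gamma, negative multinomial), on $\sum_{i,j}\theta_{ij}^3$ when $\nu_1\neq0,\nu_2=0$ (Poisson), is automatic in the bounded-support case (multinomial), and reduces to $\sum_{i,j}\theta_{ij}^2$ for the normal.

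The one genuinely computational ingredient is \ref{C}, which needs the second moment of $Y_{ij}^2$, i.e. the fourth moment of $Y_{ij}$. For each family I would evaluate $\Var(Y_{ij}^2)=E(Y_{ij}^4)-\big(E(Y_{ij}^2)\big)^2$ from the standard moment formulas, obtaining $4\theta_{ij}^2+2$ for the normal, $\theta_{ij}+6\theta_{ij}^2+4\theta_{ij}^3$ for Poisson, a $\lambda$-dependent constant times $\theta_{ij}^4$ for gamma, a bounded quantity for the multinomial (since there $Y_{ij}\in[0,1]$), and a degree-four polynomial in $\theta_{ij}$ for negative multinomial. After dividing by $np$ and summing, \ref{C} imposes nothing beyond the leading-order growth already controlled by \ref{B}. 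Condition \ref{D} is then immediate in every case: it is trivial when $\nu_2=0$, and otherwise equals $(\lambda/(\lambda+1))^2$, $(N_i/(N_i-1))^2$, or $(N_i/(N_i+1))^2$, each bounded because $\lambda>0$ and $N_i\ge2$.

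For \ref{E} the plan is to invoke Lemma \ref{sufficient_conditions_E} rather than estimate $E(\max_{i,j}Y_{ij}^2)$ directly, so the work reduces to verifying \ref{F}, \ref{G}, \ref{H} and selecting $\tilde\epsilon$. Condition \ref{G} reduces to the same fourth-order growth already encountered, and \ref{F} is literally $\frac1{np}\sum_{i,j}|\theta_{ij}|^{2+\tilde\epsilon}=\mathrm{O}(1)$. The decisive step is \ref{H}: inserting each parametrization into the skew $(\nu_1+2\nu_2\theta_{ij})/\big(\tau_{ij}^{1/2}(\nu_0+\nu_1\theta_{ij}+\nu_2\theta_{ij}^2)^{1/2}\big)$ gives $0$ for the normal, $\theta_{ij}^{-1/2}$ for Poisson, the constant $2\lambda^{-1/2}$ for gamma, $(1-2\theta_{ij})/\big(N_i\theta_{ij}(1-\theta_{ij})\big)^{1/2}$ for multinomial, and $(1+2\theta_{ij})/\big(N_i\theta_{ij}(1+\theta_{ij})\big)^{1/2}$ for negative multinomial. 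The supremum over $i$ is automatically finite for the normal and gamma, but each of the remaining three blows up as $\theta_{ij}\to0$, and this is exactly what forces the extra hypothesis $\inf_i\theta_{ij}>0$ for Poisson, multinomial, and negative multinomial. I would then take $\tilde\epsilon=2$ (case (ii) of Lemma \ref{sufficient_conditions_E}) for the normal, gamma, and negative-multinomial families, where the controlling sum is $\sum_{i,j}\theta_{ij}^4$, and $\tilde\epsilon=1$ (case (i)) for Poisson and multinomial, where only $\sum_{i,j}\theta_{ij}^3$ is needed; the two cases of that lemma then deliver the dimension conditions $pn^{-1/3}\log^{4/3}n\to0$ and $pn^{-1/2}\to0$, respectively.

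The final loose end, to be dispatched with elementary inequalities, is to confirm that the single condition listed for each family subsumes every other sum it must replace. By the power-mean inequality, $\frac1{np}\sum_{i,j}\theta_{ij}^2\le\big(\frac1{np}\sum_{i,j}\theta_{ij}^4\big)^{1/2}$ and $\frac1{np}\sum_{i,j}\theta_{ij}^3\le\big(\frac1{np}\sum_{i,j}\theta_{ij}^4\big)^{3/4}$, so $\sum_{i,j}\theta_{ij}^4=\mathrm{O}(np)$ propagates to all the lower-order sums, and likewise $\sum_{i,j}\theta_{ij}^3=\mathrm{O}(np)$ controls $\sum_{i,j}\theta_{ij}^2$; for the multinomial the listed sum $\sum_{i,j}\theta_{ij}^3=\mathrm{O}(np)$ is in fact automatic from $\theta_{ij}\in(0,1)$, leaving $\inf_i\theta_{ij}>0$ as the only substantive requirement. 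The main obstacle I anticipate is bookkeeping rather than conceptual depth: correctly computing $\Var(Y_{ij}^2)$ and the skew across all five families, and keeping straight, family by family, which of \ref{A}--\ref{H} are automatic (because a variance or the support is bounded) versus binding, so that the reduced condition is neither stronger nor weaker than what \ref{A}--\ref{E} actually demand.
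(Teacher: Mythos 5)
Your proposal is correct and follows essentially the same route as the paper: direct per-family verification of \ref{A}--\ref{D} from the quadratic-variance parametrizations together with the $\Var(Y_{ij}^2)$ computations, then \ref{E} via Lemma \ref{sufficient_conditions_E} by checking \ref{F}--\ref{H}, with the identical choices $\tilde{\epsilon}=2$ (normal, gamma, negative multinomial) and $\tilde{\epsilon}=1$ (Poisson, multinomial), and with the skew blowing up as $\theta_{ij}\to 0$ forcing $\inf_i \theta_{ij}>0$ exactly as in the paper. The only cosmetic deviation is your gamma parametrization $(\nu_2,\tau_{ij})=(1,\lambda)$, where the paper takes $(\nu_2,\tau_{ij})=(1/\lambda,1)$ to respect the standing assumption $\tau_{ij}\in\mathbb{N}$; both give the same variance function and identical bounds.
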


\begin{proof}  For the multivariate normal distribution, we have $E(W_{ij})=\theta_{ij}'$ 
and $\Var(W_{ij})= \sigma_{ij}^2$.  
Instead of the variables $W_{ij}$, we consider the transformation $Y_{ij}=W_{ij}/(\sigma_{ij}^2)^{1/2}$. Then, we have $E(Y_{ij})=\theta_{ij}'/(\sigma_{ij}^2)^{1/2}=\theta_{ij} \text{ for $\theta_{ij} \in \mathbb{R}$}$ and $\Var(Y_{ij})=1$, i.e., $\nu_0=1$, $\nu_1=\nu_2=0$ and $\tau_{ij}=1$.  Therefore, Condition \ref{A} is satisfied directly. Using the Jensen's inequality, we have
\begin{align*}
 \frac{1}{np} \sum_{i=1}^{n}\sum_{j=1}^{p} \theta_{ij}^2 \leq \bigg( \frac{1}{np}\sum_{i=1}^{n}\sum_{j=1}^{p}  \theta_{ij}^4\bigg)^{1/2},
\end{align*}
and
\begin{align*}
\frac{1}{np} \sum_{i=1}^{n}\sum_{j=1}^{p} \Var(Y_{ij}^2) &= \frac{1}{np} \sum_{i=1}^{n}\sum_{j=1}^{p}  \big[E(Y_{ij}^4) - \big(E(Y_{ij}^2)\big)^2\big] \\
&= \frac{1}{np} \sum_{i=1}^{n}\sum_{j=1}^{p} \big(2+4\theta_{ij}^2\big).
\end{align*}
Therefore, it is straightforward to verify that $\sum_{i=1}^{n}\sum_{j=1}^{p} \theta_{ij}^4 =O(np)$ imply conditions \ref{B} and \ref{C}. Condition \ref{D} is also immediately satisfied. Note each $Y_{ij}$ follows a univariate normal distribution. Thus, condition \ref{F} is trivially satisfied for $\tilde{\epsilon}=2$. Also, conditions \ref{G} and \ref{H} are satisfied.

For the multivariate Poisson distribution, since $E(Y_{ij})=\Var(Y_{ij})= \theta_{ij}$ for $\theta_{ij} >0$, we have $\nu_1=1$, $\nu_0 = \nu_2=0$, and $\tau_{ij}=1$. By Jensen's inequality, we obtain
\begin{align*}
\frac{1}{np} \sum_{i=1}^{n} \sum_{j=1}^{p} \Var(Y_{ij}) &= \frac{1}{np} \sum_{i=1}^{n}\sum_{j=1}^{p}  \theta_{ij} \leq \bigg(\frac{1}{np} \sum_{i=1}^{n} \sum_{j=1}^{p} \theta_{ij}^3 \bigg)^{1/3}.
\end{align*}
Therefore, it is straightforward to see that $\sum_{i=1}^{n} \sum_{j=1}^{p} \theta_{ij}^3 =O(np)$ implies conditions \ref{A}. Condition \ref{B} is satisfied directly. Since
\begin{align*}
\frac{1}{np} \sum_{i=1}^{n} \sum_{j=1}^{p} \Var(Y_{ij}^2) &= \frac{1}{np} \sum_{i=1}^{n}\sum_{j=1}^{p}  \big[E(Y_{ij}^4) - \big(E(Y_{ij}^2)\big)^2\big] \\
&= \frac{1}{np} \sum_{i=1}^{n} \sum_{j=1}^{p} \big(\theta_{ij}+6\theta_{ij}^2+4\theta_{ij}^3\big),
\end{align*}
we obtain \ref{C} by arguments similar to those in the proof of \ref{A}. Condition \ref{D} is also immediately satisfied. Note that each $Y_{ij}$ follows a univariate Poisson distribution. Thus, condition \ref{F} is satisfied for $\tilde{\epsilon}=1$. Also, condition \ref{G} is satisfied. Since $\sup_{i} \text{skew}(Y_{ij})= \sup_{i} (1/\theta_{ij}^{1/2})$, and by the assumption that $\inf_{i} \theta_{ij}>0$, we obtain \ref{H}.

For the multivariate gamma distribution, since $E(Y_{ij})=\theta_{ij}$ and $\Var(Y_{ij})=\theta_{ij}^2/\lambda$ for $\theta_{ij}, \lambda > 0$, we obtain $\nu_0=\nu_1=0$, $\nu_2=1/\lambda$, and $\tau_{ij}=1$. Using Jensen's inequality, we obtain
\begin{align*}
\frac{1}{np} \sum_{i=1}^{n} \sum_{j=1}^{p} \Var(Y_{ij}) &= \frac{1}{np} \sum_{i=1}^{n}\sum_{j=1}^{p}  \frac{\theta_{ij}^2}{\lambda} \leq \bigg(\frac{1}{np} \sum_{i=1}^{n} \sum_{j=1}^{p} \frac{\theta_{ij}^4}{\lambda^2}
 \bigg)^{1/2}.
\end{align*}
Therefore, it is straightforward to verify that $\sum_{i=1}^{n}\sum_{j=1}^{p}  \theta_{ij}^4 = O(np)$ implies conditions \ref{A}. Condition \ref{B} is satisfied directly.
Since
\begin{align*}
\frac{1}{np} \sum_{i=1}^{n}\sum_{j=1}^{p} \Var(Y_{ij}^2)&=\frac{1}{np} \sum_{i=1}^{n}\sum_{j=1}^{p}  \big[E(Y_{ij}^4) - \big(E(Y_{ij}^2)\big)^2\big] \\
&\leq \frac{1}{np} \sum_{i=1}^{n}\sum_{j=1}^{p}  \bigg(\frac{6}{\lambda^3}+\frac{3}{\lambda^2}+\frac{4}{\lambda}+1\bigg)\theta_{ij}^4,
\end{align*}
we can prove that condition \ref{C} holds. Condition \ref{D} is also immediately satisfied. Note that each $Y_{ij}$ follows a univariate gamma distribution. Thus, condition \ref{F} is satisfied for $\tilde{\epsilon}=2$. Also, condition \ref{G} is satisfied. Since  $\sup_{i}\text{skew}(Y_{ij})= \sup_{i}(2/\lambda^{1/2})$, and $\lambda>0$, we obtain \ref{H}.

In the case of the multivariate multinomial distribution, we have $E(W_{ij})=\theta_{ij}'$ and $\Var(W_{ij}) =\theta_{ij}'-(\theta_{ij}'^2/N_{i})$. Instead of the variables $W_{ij}$, we consider the transformation $Y_{ij}=W_{ij}/N_i$. Then, we have $E(Y_{ij})=\theta_{ij}'/N_i=\theta_{ij}$ and $\Var(Y_{ij})=(\theta_{ij}-\theta_{ij}^2)/N_{i}$, i.e., $\nu_0=0$, $\nu_1=1$, $\nu_2=-1$ and $\tau_{ij}=N_i$. Using Jensen's inequality, we can show condition \ref{A}. Condition \ref{B} is also satisfied.
Since
\begin{align*}
\frac{1}{np} \sum_{i=1}^{n}\sum_{j=1}^{p}  \Var(Y_{ij}^2)&=\frac{1}{np} \sum_{i=1}^{n} \sum_{j=1}^{p} \big[E(Y_{ij}^4) - \big(E(Y_{ij}^2)\big)^2\big] \\
& \leq \frac{1}{np} \sum_{i=1}^{n} \sum_{j=1}^{p} \bigg[\bigg(\frac{5}{N_i^3}\bigg)\theta_{ij}+\bigg(\frac{5}{N_i}-\frac{4}{N_i^2}+\frac{16}{N_i^3}\bigg)\theta_{ij}^3\bigg],
\end{align*}
it is straightforward to verify that $\sum_{i=1}^{n} \sum_{j=1}^{p} \theta_{ij}^3 =O(np)$ and $N_i\geq 2$ imply conditions \ref{C}. Condition \ref{D} is also satisfied for $N_i\geq 2$ . Note that each $Y_{ij}$ follows a Binomial distribution. Thus, condition \ref{F} is satisfied for $\tilde{\epsilon}=1$. Also, condition \ref{G} is satisfied using similar arguments. Since 
\begin{align*}
\sup\limits_{i}\text{skew}(Y_{ij})= \sup\limits_{i}\frac{1}{N_{i}^{1/2}}\frac{1-2\theta_{ij}}{(\theta_{ij}-\theta_{ij}^2)^{1/2} }\leq \sup\limits_{i}\frac{1}{(\theta_{ij}-\theta_{ij}^2)^{1/2}},
\end{align*}
and by assuming that $\inf_{i} \theta_{ij}>0$, we prove \ref{H}.

For the multivariate negative multinomial distribution, we have $E(W_{ij})=\theta_{ij}'$ and
$\Var(W_{ij})=\theta_{ij}'+(\theta_{ij}'^2/N_{i})$. Instead of the variables $W_{ij}$, we consider the transformation $Y_{ij}=W_{ij}/N_i$. Then, we have $E(Y_{ij})=\theta_{ij}'/N_i=\theta_{ij}$ and  
$\Var(Y_{ij})=(\theta_{ij}+\theta_{ij}^2)/N_{i} \text{ for $\theta_{ij} >0$ and $N_i \in \mathbb{N}$}$, i.e., $\nu_0=0$, $\nu_1=1, \nu_2=1$ and $\tau_{ij}=N_i$.
We can prove \ref{A} and \ref{B} by using similar arguments as those in the multivariate multinomial distribution.
Since
\begin{align*}
\frac{1}{np} \sum_{i=1}^{n} \sum_{j=1}^{p} \Var(Y_{ij}^2)&=\frac{1}{np} \sum_{i=1}^{n}\sum_{j=1}^{p} \big[E(Y_{ij}^4) - \big(E(Y_{ij}^2)\big)^2\big] \\
& \leq \frac{1}{np} \sum_{i=1}^{n}\sum_{j=1}^{p}  \big[a_1\theta_{ij}+a_2\theta_{ij}^2+a_3\theta_{ij}^3+a_4\theta_{ij}^4\big],
\end{align*}
where $a_k>0, k=1,2,3,4$, it is straightforward to verify that $\sum_{i=1}^{n} \sum_{j=1}^{p} \theta_{ij}^4 =O(np)$ implies conditions \ref{C}. Condition \ref{D} is satisfied directly. Note that each $Y_{ij}$ follows a negative binomial distribution. Thus, condition \ref{F} is automatically satisfied for $\tilde{\epsilon}=2$. Also, the condition \ref{G} is immediately satisfied. Since
\begin{align*}
\sup\limits_{i}\text{skew}(Y_{ij})= \sup\limits_{i}\frac{1}{N_{i}^{1/2}}\frac{1+2\theta_{ij}}{(\theta_{ij}+\theta_{ij}^2)^{1/2}} ,
\end{align*}
and by assuming that $\inf_{i} \theta_{ij}>0$, we get condition \ref{H}. 
\end{proof}

\medskip 

Since Proposition \ref{proposition_simplifiedA-E} ensures conditions \ref{A}-\ref{E}, it follows that Theorems \ref{uniform_URE}, \ref{theorem_optimality}, \ref{uniform_AURE}, and \ref{theorem_optimality_AURE} hold for the diagonal multivariate natural exponential families.

\begin{remark}
{\rm
We note that for each one of the five diagonal multivariate natural exponential families in Proposition \ref{proposition_simplifiedA-E}, the rate of convergence is controlled by the sum of the third or fourth power of the mean parameters. For the normal, gamma and negative multinomial distributions, we have that 
 $\sum_{i=1}^{n}\sum_{j=1}^{p} \theta_{ij}^4 =\textnormal{O}(np)$, i.e., $\tilde{\epsilon}=2$. Therefore, the rate of convergence becomes $\textnormal{O}(n^{-1/2}+n^{-1/4}p^{3/4}\log{n})$, i.e., $p$ should grow slower than $n^{1/3}(\log n)^{-4/3}$. For the Poisson and multinomial distributions, we have that 
 $\sum_{i=1}^{n}\sum_{j=1}^{p} \theta_{ij}^3 =\textnormal{O}(np)$, i.e., $\tilde{\epsilon}=1$. Therefore, the rate of convergence becomes $\textnormal{O}(n^{-1/2}+n^{-1/6}p^{1/3})$, i.e., $p$ should grow slower than $n^{1/2}$.
 
}
\end{remark}


\section{Simulation studies}
\label{sec_ss}

This section provides simulations to test the performance of our proposed mean shrinkage estimators. We conduct the simulations for the multivariate gamma and Poisson cases. We remark that for the other members of the diagonal multivariate natural exponential families, the simulation techniques will be similar. 

We compare four estimators. The first estimator is given by \eqref{estimator_mu_URE}, which shrinks $Y_{ij}$ toward a given location $\hat{\mu}_j$. The second estimator is given by \eqref{estimator_mean_AURE}, which shrinks $Y_{ij}$ toward the grand mean $\bar{Y}_j$. The third estimator is the naive estimator given by
\begin{equation}
\hat{\theta}_{ij}=Y_{ij},   \nonumber
\end{equation}
which is the maximum likelihood estimator for the parameters. Let $\tilde{b}_i^*$ and $\tilde{\mu}_j^*$ be the minimizers of the risk $R_{b,\mu}$ given in \eqref{risk_1} over the set $\Lambda$ given in \eqref{min_set}; then the fourth and last estimator is the oracle estimator given by
\begin{equation}
\hat{\theta}_{ij}^{\tilde{b}^*,\tilde{\mu}^*}=(1-\tilde{b}_i^*)Y_{ij}+\tilde{b}_i^*\tilde{\mu}_j^*,   \nonumber
\end{equation}
$i=1,\ldots,n, \; j=1,\ldots, p$. Note that the oracle estimator depends on the unknown parameter $\theta_{ij}$, however, it provides us with a lower bound on the risk.

\subsection{The multivariate gamma distribution}

Let $\mathcal{J}$ be the collection of all subsets of $\{1, \ldots, p \}$. For $s = (s_1,\ldots,s_p) \in \mathbb{R}^p$ and $T \in \mathcal{J}$, define 
$$
s^T := \prod_{j \in T} s_j
$$
and 
$$
s_T := \sum_{j \in T} s_j.
$$
In particular, $s^\emptyset = 1$ and $s_\emptyset = 0$.  
Let $c: \mathcal{J} \to \mathbb{R}$ be a mapping such that $c(\emptyset) = 1$.  We shall write $c_T$ as shorthand for $c(T)$, and we refer to $c_T$ as a \textit{coefficient function}.  

Following \cite{bar1994diagonal}, we say that a random vector $Y=(Y_1, \ldots, Y_p) \in \mathbb{R}^p$ has a {\it multivariate gamma distribution} if there exists $\lambda > 0$ and a coefficient function $c : \mathcal{J} \to \mathbb{R}$ such that the Laplace transform of $Y$ exists in a neighborhood of the origin and is of the form 
\begin{equation}
\label{lt_gamma}
L_{\mu}(\eta) = \E\bigg[\exp\bigg(-\sum_{j=1}^p \eta_j Y_j\bigg)\bigg] = \bigg( \sum_{T \in \mathcal{J}} c_T \eta^T \bigg)^{-\lambda},
\end{equation}
for all sufficiently small $\eta_j > 0$, $j=1,\ldots,p$.

In the following result, we derive the probability density function of a multivariate gamma distribution with Laplace transform given by \eqref{lt_gamma}. There are several articles which have studied the bivariate case, see \cite{chatelain2006parameter}, \cite{chatelain2007bivariate}, and \cite{letac2008laplace}, however it appears that our results are the first to resolve the general $p$-dimensional case.

We make use of the generalized hypergeometric series.  For $b \in \mathbb{C}$ and any nonnegative integer $i$, the \textit{rising factorial} is defined as $(b)_0 = 1$ and $(b)_i = b(b+1)\cdots (b+i-1)$ for $i \ge 1$.  Let $r, q$ be non-negative integers, and let $z,b_1,\ldots,b_r,d_1,\ldots,d_q \in \mathbb{C}$ such that $(d_j)_i \neq 0$ for all $j=1,\ldots,q$ and all $i=0,1,2,,\ldots$.  Then the \textit{generalized hypergeometric series} is defined as 
$$
{}_rF_{q}(b_1, \ldots, b_r ; d_1, \ldots, d_q ; z) := \sum_{i=0}^{\infty} \frac{(b_1)_i \ldots (b_r)_i \, z^i}{(d_1)_i \ldots (d_q)_i \, i!}.
$$
The convergence properties of this series are well-known; see \cite{andrews1999special}.  In particular, it is known that the series ${}_0F_q(d_1, \ldots, d_q ; z)$ converges for all $z \in \mathbb{C}$.  

Consider a random vector $Y=(Y_1, \ldots, Y_p) \in \bR^p$ whose distribution depends on a parameter vector, $\beta = (\beta_1,\ldots,\beta_p) \in \bR_+^p$, the positive orthant in $\bR^p$.   

\begin{proposition}
\label{proposition_pdf}
For $\beta_\star := \prod_{j=1}^p \beta_j > 1$, suppose that the random vector $Y$ has probability density function 
\begin{equation}
\label{mgd_pdf}
f(y_1,\ldots,y_p) = C_0(\lambda;\beta) \bigg(\prod_{j=1}^p \frac{y_j^{\lambda-1} e^{-\beta_j x_j}}{\Gamma(\lambda)}\bigg) \, {}_0F_{p-1}(\lambda,\ldots,\lambda;y_1\cdots y_p),
\end{equation}
$y_1,\ldots,y_p > 0$, where 
\begin{equation}
\label{C0_constant}
C_0(\lambda;\beta) = (\beta_\star - 1)^\lambda
\end{equation}
is the normalizing constant.  Then the Laplace transform of $Y$ exists and is of the form \eqref{lt_gamma}, and the corresponding coefficient function in is 
\begin{equation}
\label{coeff_function}
c_T = \begin{cases}
\dfrac{\beta^{\bar{T}}}{\beta_\star - 1}, & T \neq \emptyset \\
1, & T = \emptyset
\end{cases}.
\end{equation}
\end{proposition}

We provide the proof of Proposition \ref{proposition_pdf} in Appendix \ref{appendix_B}.


\begin{figure}[t!]
\centering
\includegraphics[width=\textwidth, height=10.2cm]{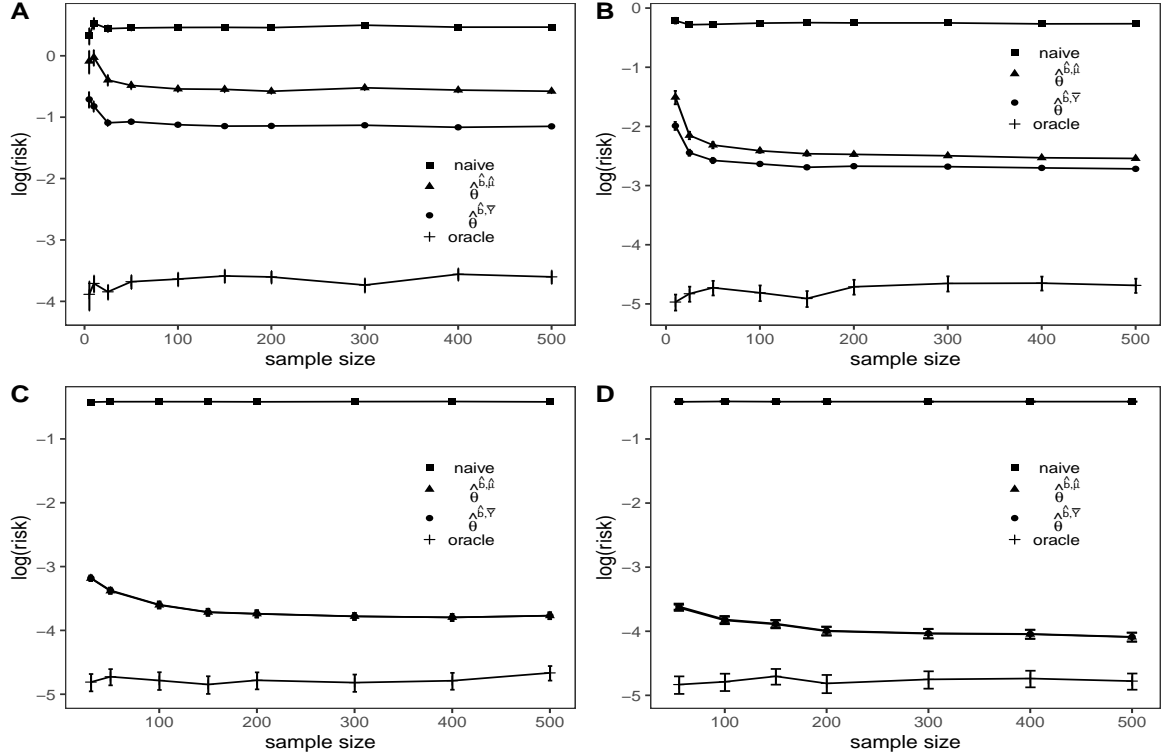}
\caption{Comparisons of the average risk of different estimators for gamma distribution using variable-at-a-time Metropolis algorithm. \textbf{A}: $p=2$. \textbf{B}: $p=5$. \textbf{C}: $p=25$. \textbf{D}: $p=50$.}
\label{fig:gamma_mhv}
\end{figure}


In our simulation studies the dimension, $p$, is chosen to be  2, 5, 25, and 50. The sample size, $n$, is chosen to vary from values greater than $p$ to 500. We draw $\lambda$ from a univariate inverse gamma distribution with shape parameter, $a_0=11$, and rate parameter, $a_1=20$. We form a vector $\beta_{i}$ for $i=1, \ldots, n$, where each of its element $\beta_{ij}$, $j=1,\ldots, p$ is drawn from a univariate gamma distribution with shape parameter, $b_0=20$, and rate parameter, $b_1=10$. Given $\beta_{i}$ and $\lambda$, we use a variable-at-a-time Metropolis algorithm to generate a sample vector $Y_{i}$ from the multivariate gamma distribution with parameters $\beta_{i}$ and $\lambda$ with p.d.f. given by \eqref{mgd_pdf}. The proposal distribution is a univariate Gaussian distribution with mean set at current state of the chain and variance equals to 9.  

After we obtain the sample $Y_1, \ldots, Y_n$, we calculate the mean estimators, as well as their corresponding risk. In order to obtain accurate estimates for the risks, we repeat the above process 200 times and calculate the average risk for each estimator. 

In Figure \ref{fig:gamma_mhv}, Panels A-D show the logarithm of the average risk of the estimators with 95\% confidence interval for dimension 2, 5, 25, and 50.  We plot the logarithm of the risks to visualize more accurately the size of the risks. The logarithm of the risk of the naive estimator $\hat{\theta}_{ij}$, which is relatively constant for all $n$ as expected, is larger than the other three estimators. The performance of the two shrinkage estimators $\hat{\theta}_{ij}^{\hat{b}^*,\bar{Y}}$ and $\hat{\theta}_{ij}^{\hat{b}^*,\hat{\mu}^*}$ is very good as they are close to the oracle estimators. The shrinkage estimator $\hat{\theta}_{ij}^{\hat{b}^*,\bar{Y}}$ performs better than $\hat{\theta}_{ij}^{\hat{b}^*,\hat{\mu}^*}$ for lower dimensions. However, as we increase the dimension, we observe that the performances of these two estimators are approaching each other.  In panels C and D, the simulated logarithmic risks value for the estimators $\hat{\theta}_{ij}^{\hat{b}^*,\bar{Y}}$ and $\hat{\theta}_{ij}^{\hat{b}^*,\hat{\mu}^*}$ are so close that their plots are nearly coincident.

\subsection{The multivariate Poisson distribution}

According to \cite{bar1994diagonal}, a random vector $Y=(Y_1,\ldots, Y_p) \in \bR^p$ is said to have a {\it multivariate Poisson distribution} if there exists a coefficient function $c : \mathcal{J} \to \bR$ such that the Laplace transform of $Y$ exists in a neighborhood of the origin and is of the form 
\begin{equation}
\label{lt_poisson}
L_{\mu}(\eta)=\E\bigg[\exp\bigg(-\sum_{j=1}^p \eta_j x_j\bigg)\bigg] =\exp\bigg(\sum_{T \subseteq \mathcal{J}, T \neq \emptyset} c_T (e^{(-\eta)_T} - 1)\bigg),
\end{equation}
for all sufficiently small $\eta_j > 0$, $j=1,\ldots,p$.

In the next result, we derive the probability density function of a Poisson distribution for $p \in \mathbb{N}$. We consider a random vector $Y \in \mathbb{R}^p$ whose distribution depends on a parameter vector $c = (c_{1},\ldots,c_{1 \cdots p}) \in \mathbb{R}^{2^p-1}_{+}$.

\begin{proposition}
\label{proposition_pmf_poisson}
For $c = (c_{1},\ldots,c_{1 \cdots p}) \in \mathbb{R}^{2^p-1}_{+}$, suppose that the random vector $Y$ has probability density function 
\begin{align}
\label{poisson_pdf_pd}
P(&Y_1=l_1, \ldots ,Y_p=l_p) \nonumber \\
&= \sum_{\substack{j_{1\bullet}=l_1 \\ \cdots  \\ j_{p\bullet}=l_p}} \bigg(\prod_{l=1}^p \frac{e^{-c_l} c_l^{j_l}}{j_l!}\bigg) \cdot \bigg(\prod_{1 \le l < m \le p} \frac{e^{-c_{lm}} c_{lm}^{j_{lm}}}{ j_{lm}! }\bigg) \cdots \bigg(\frac{ e^{-c_{1\cdots p}} c_{1 \cdots p}^{j_{1 \cdots p}} }{ j_{1 \cdots p}! }\bigg),
\end{align}
$l_1,\ldots,l_p \in \mathbb{N}_{0} $, where 
$$
j_{k\bullet} = \sum_{T \in \mathcal{J} : \, k \in T} j_T,
$$
for each $k=1,\ldots,p$. Then, the Laplace transform of $Y$ exists and is of the form \eqref{lt_poisson}.
\end{proposition}

We provide the proof of Proposition \ref{proposition_pmf_poisson} in Appendix \ref{appendix_C}.

Now, define for each non-empty $T \in \mathcal{J}$ mutually independent random variables $X_T$, where $X_T$ has a Poisson distribution with parameter $c_T$.  Also define 
$$
X_{j\bullet} = \sum_{T \in \mathcal{J} : \, j \in T} X_T,
$$
$j=1,2,3$.  Then it follows from \eqref{poisson_pdf_pd} that $(Y_1, \ldots ,Y_p) \steq (X_{1\bullet}, \ldots ,X_{p\bullet})$.  To see this, observe that 
\begin{align*}
P(X_{1\bullet}=l_1, \ldots ,X_{p\bullet}=l_p) = \sum_{\substack{j_{1\bullet}=l_1 \\ \cdots \\ j_{p\bullet}=l_p}}  P(X_1=j_1, \ldots,X_{1\cdots p}=j_{1 \cdots p}).
\end{align*}
Since the variables $X_T$ are mutually independent then we obtain 
\begin{align*}
P(X_{1\bullet}=l_1,&\ldots,X_{p\bullet}=l_p) \\
&= \sum_{\substack{j_{1\bullet}=l_1 \\ \cdots \\ j_{p\bullet}=l_p}} \prod_{l=1}^p P(X_l=j_l) \cdot \prod_{1 \le l < m \le p} P(X_{lm}=j_{lm}) \cdots P(X_{1 \cdots p}=j_{1 \cdots p}) \\
&= \sum_{\substack{j_{1\bullet}=l_1 \\ \cdots \\ j_{p\bullet}=l_p}} \prod_{l=1}^p \frac{e^{-c_l} c_l^{j_l}}{j_l!} \cdot \prod_{1 \le l < m \le p} \frac{e^{-c_{lm}} c_{lm}^{j_{lm}}}{ j_{lm}! } \cdots \frac{ e^{-c_{1 \cdots p}} c_{1 \cdots p}^{j_{1 \cdots p}} }{ j_{1 \cdots 3}! } \nonumber \\
&\equiv P(Y_1=l_1,\ldots,Y_p=l_p).
\end{align*}

The above remark leads to the simulation algorithm called multivariate reduction scheme.

For the simulation studies, the dimension, $p$, is chosen to take the values  2, 5, 10, and 20. The sample size, $n$, is chosen to vary from values greater than $p$ to 500. For each non-empty $T \in \mathcal{J}$, we simulate independent $c_T$ from a gamma distribution with shape parameter, $a_0=1$ for all dimensions, and rate parameter, $a_1=1, 0.1, 0.01, 0.0001$ for dimensions 2, 5, 10, and 20, respectively. We simulate independent univariate Poisson-distributed random variables $X_T$ with parameter $c_T$. We generate a sample vector $Y$ from the multivariate Poisson distribution  with parameter set $\{c_T: T \in \mathcal{J}\}$  by setting $Y_{j} = \sum_{T \in \mathcal{J} : \, j \in T} X_T$, $j=1,\ldots,p$. After we obtain the sample $Y_1, \ldots, Y_n$, we calculate the mean estimators, as well as their corresponding risk. In order to obtain accurate estimates for the risks, we repeat the above process 200 times and calculate the average risk for each estimator.

\medskip

\begin{figure}[!ht]
\centering
\includegraphics[width=\textwidth, height=10.2cm]{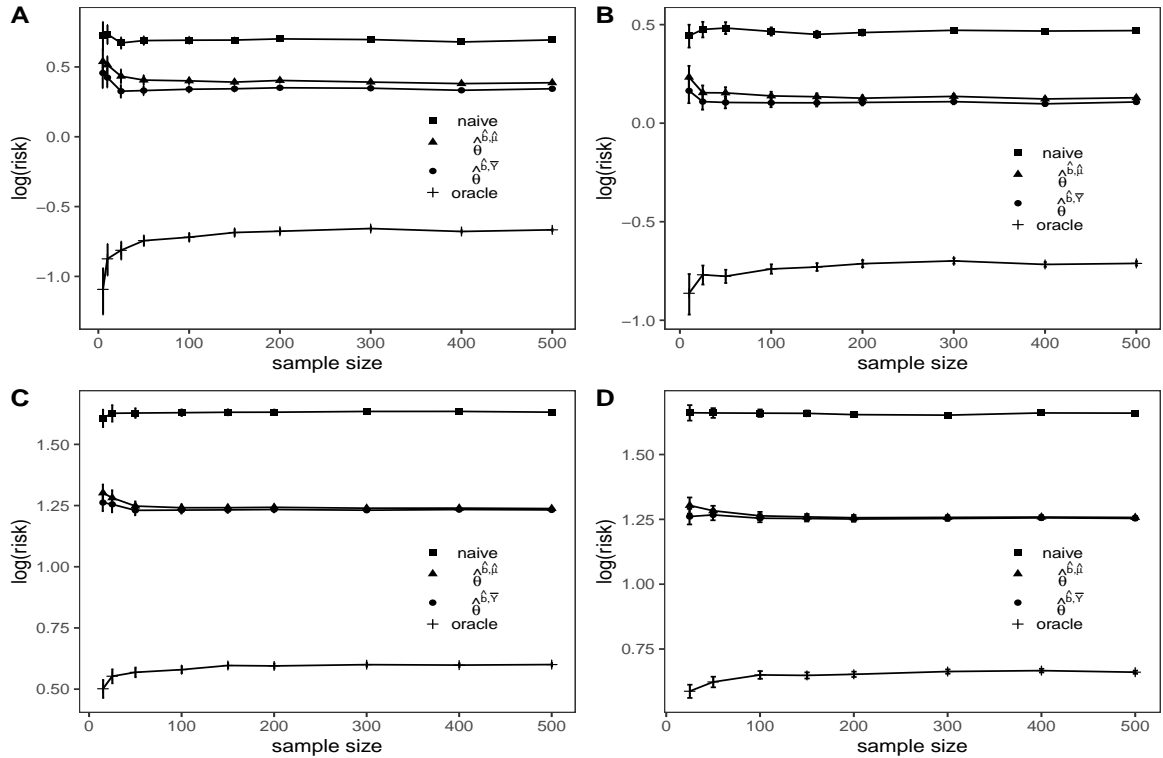}
\caption{Comparisons of the average risk of different estimators for Poisson distribution using the reduction scheme. \textbf{A}: $p=2$. \textbf{B}: $p=5$. \textbf{C}: $p=10$. \textbf{D}: $p=20$.}
\label{fig:poisson_fs_small}
\end{figure}


In Figure \ref{fig:poisson_fs_small}, panels A-D show the logarithm of the average risk of the estimators with 95\% confidence interval for dimension 2, 5, 10, and 20. We again plot the logarithm of the risks to visualize more accurately the size of the risks. The logarithm of the risk of the naive estimator $\hat{\theta}_{ij}$, which is relatively constant for all $n$ as expected, is larger than the other three estimators. The performance of the two shrinkage estimators $\hat{\theta}_{ij}^{\hat{b}^*,\bar{Y}}$ and $\hat{\theta}_{ij}^{\hat{b}^*,\hat{\mu}^*}$ is very good as they are close to the oracle estimators. The shrinkage estimator $\hat{\theta}_{ij}^{\hat{b}^*,\bar{Y}}$ performs better than $\hat{\theta}_{ij}^{\hat{b}^*,\hat{\mu}^*}$ for lower dimensions. However, as we increase the dimension, we observe that the performances of these two estimators are approaching each other.



\phantom{a}

\medskip

\bibliographystyle{apalike}
\bibliography{bibliography}

\newpage 

\begin{center}
{\Large \textbf{Appendix}}
\end{center}


\appendix

\section{Proof of Lemma \ref{lemma_T1_T2_T3}}
\label{appendix_A}

In this section, we establish Lemma \ref{lemma_T1_T2_T3}.  In proving that result, we will apply Doob's $L^r$ maximal inequality \cite[Theorem 3.4, p.~317]{doob1990stochastic}, which we state as follows.  

\begin{lemma}
\label{Doob_max_ineq}
{\rm (Doob's $L^r$ maximal inequality)} Let $\{M_n: n \geq 1\}$ be a martingale. If $r>1$ and $E\big(|M_j|^r \big)< \infty$ for all $0\leq j \leq n$, then
$$
E\Big( \max\limits_{0\leq j \leq n} |M_j|\Big)^r \leq \bigg( \frac{r}{r-1}\bigg)^r E\big(|M_n|^r\big).
$$
\end{lemma}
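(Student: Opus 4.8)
The plan is to reduce the statement to the classical chain of three ingredients: passage to a nonnegative submartingale, Doob's weak-type maximal inequality, and a layer-cake/Fubini computation closed off by H\"older's inequality. First I would observe that since $x \mapsto |x|$ is convex and each $M_j$ is integrable, conditional Jensen's inequality shows that $\{|M_j| : 0 \le j \le n\}$ is a nonnegative submartingale with respect to the filtration $\mathcal{F}_j = \sigma(M_0, \ldots, M_j)$; hence it suffices to prove the bound for $M^* := \max_{0 \le j \le n} |M_j|$ in terms of $E(|M_n|^r)$.

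The key step is the weak-type (distributional) maximal inequality: for every $\lambda > 0$,
$$
\lambda \, P(M^* \ge \lambda) \le E\big(|M_n| \, \mathbf{1}_{\{M^* \ge \lambda\}}\big).
$$
I would prove this by decomposing $\{M^* \ge \lambda\}$ into the disjoint first-passage events $A_j = \{|M_0| < \lambda, \ldots, |M_{j-1}| < \lambda, |M_j| \ge \lambda\}$, $j = 0, \ldots, n$. Since $A_j \in \mathcal{F}_j$ and $|M_j| \ge \lambda$ on $A_j$, the submartingale property gives $\lambda \, P(A_j) \le E(|M_j| \mathbf{1}_{A_j}) \le E(|M_n| \mathbf{1}_{A_j})$; summing over $j$ yields the claim. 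Equivalently, this is an optional-stopping argument applied to the stopping time $\tau = \min\{j : |M_j| \ge \lambda\} \wedge n$.

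With the weak-type bound in hand, I would pass to $L^r$ via the layer-cake representation
$$
E\big((M^*)^r\big) = \int_0^\infty r \lambda^{r-1} P(M^* \ge \lambda) \, d\lambda \le \int_0^\infty r \lambda^{r-2} E\big(|M_n| \mathbf{1}_{\{M^* \ge \lambda\}}\big) \, d\lambda.
$$
Applying Tonelli's theorem to interchange the expectation with the $\lambda$-integral, the inner integral $\int_0^{M^*} r\lambda^{r-2} \, d\lambda$ evaluates to $\tfrac{r}{r-1}(M^*)^{r-1}$, so that $E((M^*)^r) \le \tfrac{r}{r-1} E\big(|M_n|(M^*)^{r-1}\big)$. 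A single application of H\"older's inequality with conjugate exponents $r$ and $r/(r-1)$ bounds the right-hand side by $\tfrac{r}{r-1}\big(E(|M_n|^r)\big)^{1/r}\big(E((M^*)^r)\big)^{(r-1)/r}$, and rearranging produces the asserted constant $\big(r/(r-1)\big)^r$.

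The main obstacle is not the inequalities themselves but the legitimacy of the final rearrangement: one may divide both sides by $\big(E((M^*)^r)\big)^{(r-1)/r}$ only when this quantity is finite and nonzero. I would dispatch this by noting that $(M^*)^r \le \sum_{j=0}^n |M_j|^r$, so $E((M^*)^r) < \infty$ by the hypothesis $E(|M_j|^r) < \infty$ for all $0 \le j \le n$; if $E((M^*)^r) = 0$ the inequality holds trivially, and otherwise the division is valid and yields the stated bound after raising both sides to the $r$-th power.
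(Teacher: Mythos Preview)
Your proof is correct and is the standard textbook argument for Doob's $L^r$ maximal inequality. The paper, however, does not prove this lemma at all: it simply states the result and cites \cite[Theorem 3.4, p.~317]{doob1990stochastic}, using it as a black box in the proof of Lemma~\ref{lemma_T1_T2_T3}. So there is no ``paper's own proof'' to compare against; you have supplied a full self-contained argument where the authors were content to invoke a reference.
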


\smallskip

\begin{proof}[Proof of Lemma \ref{lemma_T1_T2_T3}]  
In bounding $T_1$, we define 
\begin{equation}
\label{Zi_first_exp}
Z_i = \frac{1}{p} \sum_{j=1}^{p} 
\bigg[ \frac{V(Y_{ij})}{\tau_{ij}+\nu_2}-(Y_{ij}-\theta_{ij})^2 \bigg].
\end{equation}
An alternative expression for $Z_i$ is 
\begin{equation}
\label{Zi_second_exp}
Z_i = \frac{1}{p} \sum_{j=1}^{p} \bigg[-\frac{\tau_{ij}}{\tau_{ij}+\nu_2}\big(Y_{ij}^2-E(Y_{ij}^2)\big) + 
\bigg(2\theta_{ij}+\frac{\nu_1}{\tau_{ij} + \nu_2}\bigg)(Y_{ij}-\theta_{ij})\bigg].
\end{equation}
To prove this, we substitute in \eqref{Zi_first_exp} the formula $V(Y_{ij}) = \nu_0 + \nu_1 Y_{ij} + \nu_2 Y_{ij}^2$ from \eqref{V_polynomial}; then the $i$th term in \eqref{Zi_first_exp} is a quadratic polynomial in $Y_{ij}$.  So to prove \eqref{Zi_second_exp}, we need only to verify that the coefficients of $Y_{ij}^k$, $k=0,1,2$ in the $i$th terms in \eqref{Zi_first_exp} and \eqref{Zi_second_exp} are the same.  

For $k = 1, 2$, it is simple to verify that the coefficient of $Y_{ij}^k$ in the $i$th terms in \eqref{Zi_first_exp} and \eqref{Zi_second_exp} are equal.  For $k=0$, i.e., the term which is free of $Y_{ij}$, we need to show that  
\begin{align}
\label{coefficient_k=0}
\frac{\tau_{ij}}{\tau_{ij}+\nu_2} E(Y_{ij}^2) - \bigg(2\theta_{ij}+\frac{\nu_1}{\tau_{ij} + \nu_2}\bigg) \theta_{ij} = \frac{\nu_0}{\tau_{ij}+\nu_2} - \theta_{ij}^2.
\end{align} 
Noting that $\theta_{ij}^2 \equiv (E Y_{ij})^2 = E (Y_{ij}^2) - \Var(Y_{ij})$ and $\tau_{ij}Var(Y_{ij}) = \nu_0 + \nu_1 \theta_{ij} + \nu_2 \theta_{ij}^2$, we find that the left-hand side of \eqref{coefficient_k=0} equals 
\begin{align*}
\frac{\tau_{ij}}{\tau_{ij}+\nu_2}\big(\Var(Y_{ij})+\theta_{ij}^2\big) & - 2\theta_{ij}^2-\frac{\nu_1\theta_{ij}}{\tau_{ij}+\nu_2} \\
&= \frac{\tau_{ij}\Var(Y_{ij}) + \tau_{ij}\theta_{ij}^2}{\tau_{ij}+\nu_2} - 2\theta_{ij}^2-\frac{\nu_1\theta_{ij}}{\tau_{ij}+\nu_2} \\
&=\frac{\nu_0 + \nu_1\theta_{ij} + \nu_2\theta_{ij}^2 + \tau_{ij}\theta_{ij}^2 - 2 (\tau_{ij}+\nu_2)\theta_{ij}^2 - \nu_1\theta_{ij}}{\tau_{ij}+\nu_2} \\
&=\frac{\nu_0 - (\tau_{ij}+\nu_2)\theta_{ij}^2}{\tau_{ij}+\nu_2},
\end{align*}
which equals the right-hand side of \eqref{coefficient_k=0}.

Returning to \eqref{Zi_second_exp}, noting that $E(Z_i)$=0, and applying Jensen's inequality, we obtain 
\begin{align}
\label{lemma_proof_T1_1}
E(Z_i^2) 
&\leq \frac{1}{p}\sum_{j=1}^{p} E\bigg(-\frac{\tau_{ij}}{\tau_{ij}+\nu_2}\big(Y_{ij}^2-E(Y_{ij}^2)\big) + 
\bigg(2\theta_{ij}+\frac{\nu_1}{\tau_{ij} + \nu_2}\bigg)(Y_{ij}-\theta_{ij})\bigg)^2. 
\end{align}
Applying the inequality $(a+b)^2\leq 2(a^2 + b^2)$ for all $a, b \in \mathbb{R}$, we have
\begin{align}
\label{lemma_proof_T1_2}
E(Z_i^2)&\leq \frac{2}{p}\sum_{j=1}^{p} \bigg[ \bigg(\frac{\tau_{ij}}{\tau_{ij}+\nu_2}\bigg)^2 \Var(Y_{ij}^2) + 
\bigg(2\theta_{ij}+\frac{\nu_1}{\tau_{ij} + \nu_2}\bigg)^2 \Var(Y_{ij})\bigg].
\end{align}
By applying the same inequality as in \eqref{lemma_proof_T1_1}, we find that \eqref{lemma_proof_T1_2} is bounded above by
\begin{align*}
\frac{2}{p}&\sum_{j=1}^{p} \bigg[ \bigg(\frac{\tau_{ij}}{\tau_{ij}+\nu_2}\bigg)^2 \Var(Y_{ij}^2) + 
8\theta_{ij}^2 \Var(Y_{ij})+2\bigg(\frac{\nu_1}{\tau_{ij} + \nu_2}\bigg)^2 \Var(Y_{ij})\bigg]\\
\equiv & \frac{2}{p}\sum_{j=1}^{p} \bigg(\frac{\tau_{ij}}{\tau_{ij}+\nu_2}\bigg)^2 \Var(Y_{ij}^2) + 
\frac{16}{p}\sum_{j=1}^{p} \theta_{ij}^2\Var(Y_{ij}) {\hskip-0.3033pt} +
\frac{4}{p}\sum_{j=1}^{p}\bigg(\frac{\nu_1}{\tau_{ij} + \nu_2}\bigg)^2 \Var(Y_{ij}).
\end{align*}
Therefore,
\begin{align*}
 E\bigg(\frac{1}{n} \sum_{i=1}^{n} Z_i\bigg)^2
&= \frac{1}{n^2} \sum_{i=1}^{n} \Var(Z_i) \\ &\leq  \frac{2}{n^2p}\sum_{i=1}^{n}\sum_{j=1}^{p} \bigg(\frac{\tau_{ij}}{\tau_{ij}+\nu_2}\bigg)^2 \Var(Y_{ij}^2)+ \frac{16}{n^2p}\sum_{i=1}^{n}\sum_{j=1}^{p} \theta_{ij}^2\Var(Y_{ij})\\
&  \qquad +\frac{4}{n^2p}\sum_{i=1}^{n}\sum_{j=1}^{p}\bigg(\frac{\nu_1}{\tau_{ij}+\nu_2}\bigg)^2 \Var(Y_{ij}).
\end{align*}
Since $\tau_{ij} \geq 1$ for all $i,j$, we have 
\begin{align*}
\frac{1}{(\tau_{ij}+\nu_2)^2} \leq \frac{\tau_{ij}^2}{(\tau_{ij}+\nu_2)^2},
\end{align*}
and by taking the supremum over $i,j$, we further obtain that
\begin{align*}
\sup\limits_{i,j}\bigg(\frac{1}{\tau_{ij}+\nu_2} \bigg)^2 \leq \sup\limits_{i,j}\bigg(\frac{\tau_{ij}}{\tau_{ij}+\nu_2}\bigg)^2.
\end{align*}
Hence, condition \ref{D} implies that $\sup\limits_{i,j}\big(\nu_1/(\tau_{ij}+\nu_2)\big)^2<\infty$.
Since,
$$T_1=  \bigg| \frac{1}{n}\sum_{i=1}^{n} \frac{1}{p}\sum_{j=1}^{p} 
\bigg[ \frac{V(Y_{ij})}{\tau_{ij}+v_2}-(Y_{ij}-\theta_{ij})^2 \bigg] \bigg|=\bigg| \frac{1}{n}\sum_{i=1}^{n} Z_i \bigg|, $$
and under conditions \ref{A} - \ref{D}, as $n, p \to \infty$, $E(T_1^2)=E\big( \sum_{i=1}^{n} Z_i /n \big)^2 =\text{O}(1/ n)$. Hence, we obtain the desired result.

In bounding $T_2$, we assume, without loss of generality, that $ \tau_{1 \boldsymbol{\cdot}} \leq \cdots \leq \tau_{n \boldsymbol{\cdot}}$ and thus $b_1 \geq \cdots \geq b_n$. Therefore, we obtain
\begin{align*}
 T_2 &= \sup\limits_{1\geq b_1\geq \cdots \geq b_n \geq 0}  \frac{2}{np} \bigg| \sum_{i=1}^{n} \sum_{j=1}^{p} 
b_i \bigg[ \frac{V(Y_{ij})}{\tau_{ij}+v_2}-(Y_{ij}-\theta_{ij})^2 \bigg] \bigg|\\
&=\max\limits_{1 \leq k \leq n}  \frac{2}{np} \bigg| \sum_{i=1}^{k} \sum_{j=1}^{p}  \bigg[ \frac{V(Y_{ij})}{\tau_{ij}+v_2}-(Y_{ij}-\theta_{ij})^2 \bigg] \bigg|
=\max\limits_{1 \leq k \leq n}  \frac{2}{n} \bigg| \sum_{i=1}^{k} Z_i \bigg|.
\end{align*}
Let $M_k= \sum_{i=1}^{k} Z_i$. Then, 
\begin{equation}
\label{M_martingale1}
\begin{aligned}
E(M_{k+1}|M_1,\ldots ,M_k) &= E(Z_1+\cdots+Z_{k+1}| Z_1,\ldots, Z_k) \\
&= E(Z_1| Z_1,\ldots, Z_k) + \cdots + E(Z_{k+1}| Z_1,\ldots, Z_k) .
\end{aligned}
\end{equation}
Using the fact that $Z_i$ are independent and $E(Z_i)= 0$ for all $i=1,\ldots,n$, we obtain 
\begin{align}
\label{M_martingale2}
E(M_{k+1}|M_1,\ldots ,M_k)&= Z_1+\cdots +Z_k+E(Z_{k+1}) \nonumber \\
&=Z_1+ \cdots +Z_k = M_{k}.
\end{align}
Therefore, $\{M_k: k \geq 1\}$ forms a martingale. Applying Lemma \ref{Doob_max_ineq} for $r=2$, we have
\begin{align*}
E\Big(\max\limits_{1 \leq k \leq n} M_k^2\Big) \leq 4 E(M_n^2) &= 4E\bigg( \sum_{i=1}^{n} Z_i\bigg)^2 \\
&= 4\Var\bigg(\sum_{i=1}^{n}Z_i\bigg) = 4\sum_{i=1}^{n} \Var(Z_i) = 4 \sum_{i=1}^{n} E(Z_i^2),
\end{align*}
and thus again we obtain, as $n, p \rightarrow\infty$,  $E(T_2^2) \leq 8 E\big(\sum_{i=1}^{n} Z_i/n\big)^2 = \text{O}(1/n)$. Hence, we obtain the desired result.

In bounding $T_3$, we note that 
\begin{multline}
\label{uniform_URE_proof_0}
\frac{2}{np} \sum_{i=1}^{n} \sum_{j=1}^{p} b_i(Y_{ij}-\theta_{ij})(\theta_{ij}-\mu_{j}) \\
= \frac{2}{np} \sum_{i=1}^{n} \sum_{j=1}^{p} b_i\theta_{ij}(Y_{ij}-\theta_{j}) 
- \frac{2}{np} \sum_{i=1}^{n} \sum_{j=1}^{p} b_i\mu_{j}(Y_{ij}-\theta_{ij}).
\end{multline}
Taking the supremum over $\Lambda$ of the absolute value of \eqref{uniform_URE_proof_0}, we have
that
\begin{align*}
T_3 \leq T_{31}+T_{32}
\end{align*}
where
\begin{align*}
&T_{31}= \sup_{(b,\mu) \in \Lambda}   \frac{2}{np} \bigg| \sum_{i=1}^{n} \sum_{j=1}^{p} 
b_i \theta_{ij}(Y_{ij}-\theta_{ij}) \bigg|,\\
&T_{32}= \sup_{(b,\mu) \in \Lambda} \frac{2}{np} \bigg| \sum_{i=1}^{n} \sum_{j=1}^{p} b_i\mu_{j}(Y_{ij}-\theta_{ij}) \bigg|.
\end{align*}

For the term  $T_{31}$, since $b_1\geq \cdots \geq b_n$, 
\begin{align*}
T_{31}&= \sup\limits_{1\geq b_1\geq \cdots \geq b_n \geq 0}  \frac{2}{np} \bigg| \sum_{i=1}^{n} \sum_{j=1}^{p} 
b_i \theta_{ij}(Y_{ij}-\theta_{ij}) \bigg|\nonumber\\
&=\max\limits_{1 \leq k \leq n}  \frac{2}{np} \bigg| \sum_{i=1}^{k} \sum_{j=1}^{p}  \theta_{ij}(Y_{ij}-\theta_{ij}) \bigg|.
\end{align*}
Let $N_k= \sum_{i=1}^{k} U_i$,  where $U_i=  \sum_{j=1}^{p}  \theta_{ij}(Y_{ij}-\theta_{ij})/p$. By conditioning on $U_1, \ldots, U_k$ and applying the same arguments as in eq. \eqref{M_martingale1} and \eqref{M_martingale2}, we obtain that 
\begin{align*}
E(N_{k+1}|N_1,\ldots ,N_k)&= E(U_1+\cdots+U_{k+1}| U_1,\ldots, U_k) =N_k,
\end{align*}
and therefore $\{N_k: k \geq 1\}$ forms a martingale. Applying Lemma \ref{Doob_max_ineq} for $r=2$, we obtain
\begin{align*}
E\Big(\max\limits_{1 \leq k \leq n} N_k^2\Big) \leq 4 E(N_n^2)=4 \sum_{i=1}^{n} E(U_i^2),
\end{align*}
and under condition \ref{B},  we obtain, as $n, p \rightarrow\infty$,  
$$E(T_{31}^2) \leq \frac{8}{n} E\bigg(\sum_{i=1}^{n} U_i\bigg)^2 \leq \frac{8}{n^2p}\sum_{i=1}^n\sum_{j=1}^p \Var({Y_{ij}})\theta_{ij}^2= \text{O}(1/n).$$ 

For the term $T_{32}$, we have 
\begin{align}
\label{lemma_proof_T32_1}
\frac{2}{np} E \bigg( \sup_{(b,\mu) \in \Lambda} & \bigg| \sum_{i=1}^{n} \sum_{j=1}^{p} b_i \mu_{j}(Y_{ij}-\theta_{ij}) \bigg| \bigg) \nonumber \\
&\leq \frac{2}{np} E \bigg( \sup_{(b,\mu) \in \Lambda}   \sum_{j=1}^{p}\big|\mu_{j}\big|\bigg| \sum_{i=1}^{n} b_i(Y_{ij}-\theta_{ij}) \bigg| \bigg) \nonumber \\
&\leq \frac{2}{np} E \bigg(  \sum_{j=1}^{p}\sup_{(b,\mu) \in \Lambda}\big|\mu_{j}\big|\sup_{(b,\mu) \in \Lambda}\bigg| \sum_{i=1}^{n} b_i(Y_{ij}-\theta_{ij}) \bigg| \bigg).
\end{align}
Since $|\mu_j|\leq \max\{|Y_{il}|: i= 1,\ldots,n, l=1,\ldots,p\} \text{ for all $j=1,\ldots, p$}$, it follows that \eqref{lemma_proof_T32_1} equals to
\begin{align}
\label{lemma_proof_T32_2}
\frac{2}{np} \sum_{j=1}^{p} E \bigg( \max\limits_{i,j}\big|Y_{ij}\big|\sup_{(b,\mu) \in \Lambda}\bigg| \sum_{i=1}^{n} b_i(Y_{ij}-\theta_{ij}) \bigg| \bigg).
\end{align}
Applying the Cauchy-Schwarz inequality, we derive that \eqref{lemma_proof_T32_2} is bounded above by
\begin{multline*}
\frac{2}{np} \sum_{j=1}^{p} \bigg[ E \big( \max\limits_{i,j}Y_{ij}^2 \big) E \bigg( \sup_{(b,\mu) \in \Lambda}\bigg| \sum_{i=1}^{n} b_i(Y_{ij}-\theta_{ij}) \bigg|^2 \bigg) \bigg]^{1/2} \\
\equiv \frac{2}{np} \big(E \big(\max\limits_{i,j}Y_{ij}^2 \big) \big)^{1/2} \sum_{j=1}^{p} \bigg[  E \bigg( \sup_{(b,\mu) \in \Lambda}\bigg| \sum_{i=1}^{n} b_i(Y_{ij}-\theta_{ij}) \bigg|^2 \bigg) \bigg]^{1/2}.
\end{multline*}
For $b_1\geq \cdots \geq b_n$, we apply Lemma \ref{Doob_max_ineq} as before to obtain 
\begin{align}
\label{discuss}
\frac{2}{np} E & \bigg( \sup_{(b,\mu) \in \Lambda} \bigg| \sum_{i=1}^{n} \sum_{j=1}^{p} b_i\mu_{j}(Y_{ij}-\theta_{ij}) \bigg| \bigg) \nonumber \\
&\leq \frac{2}{np} \big( E \big( \max\limits_{i,j}Y_{ij}^2 \big)\big)^{1/2}\sum_{j=1}^{p} \bigg[  E \bigg( \max\limits_{1\leq k \leq n}\bigg| \sum_{i=1}^{k} (Y_{ij}-\theta_{ij}) \bigg|^2 \bigg) \bigg]^{1/2} \nonumber \\
&\leq \frac{2}{np} \big( E \big( \max\limits_{i,j}Y_{ij}^2 \big) \big)^{1/2} \sum_{j=1}^{p}   \bigg[ \sum_{i=1}^{n} \Var(Y_{ij}) \bigg]^{1/2}.
\end{align}
By again applying Jensen's inequality, we find that \eqref{discuss} is bounded above by 
\begin{multline*}
\frac{2}{np} \Big( E \big( \max\limits_{i,j}Y_{ij}^2 \big)\Big)^{1/2} \bigg( p \sum_{i=1}^{n}   \sum_{j=1}^{p} \Var(Y_{ij}) \bigg)^{1/2} \\
\equiv 2 \bigg( \frac{1}{n}E \big( \max\limits_{i,j}Y_{ij}^2 \big)\bigg)^{1/2} \bigg( \frac{1}{np}\sum_{j=1}^{p}    \sum_{i=1}^{n} \Var(Y_{ij}) \bigg)^{1/2};
\end{multline*}
hence, 
\begin{multline*}
\frac{2}{np} E \bigg( \sup_{(b,\mu) \in \Lambda} \bigg| \sum_{i=1}^{n} \sum_{j=1}^{p} b_i\mu_{j}(Y_{ij}-\theta_{ij}) \bigg| \bigg) \\
\le 2\bigg( \frac{1}{n}E \big( \max\limits_{i,j}Y_{ij}^2 \big)\bigg)^{1/2} \bigg( \frac{1}{np}\sum_{j=1}^{p}    \sum_{i=1}^{n} \Var(Y_{ij}) \bigg)^{1/2}.
\end{multline*}
Therefore, under conditions \ref{A} and \ref{E}, it follows that, as $n, p\rightarrow\infty$, $E(|T_{32}|)=\text{O}(n^{-\alpha/2}p^{\beta/2})$.
\end{proof}

\section{Proof of Proposition \ref{proposition_pdf}}
\label{appendix_B}

In this section, we establish Proposition \ref{proposition_pdf}.

\begin{proof}[Proof of Proposition \ref{proposition_pdf}]
First, we verify that the function in \eqref{mgd_pdf} is a density function.  It is well-known that the generalized hypergeometric series, ${}_0F_{p-1}(\lambda,\ldots,\lambda;y)$, converges for all $y in \mathbb{R}$; see \cite[pp.~62, Theorem 2.1.1]{andrews1999special}.  It is also clear that $f(y) > 0$ for $\lambda > 0$ and all $\beta_j > 0$, $j=1,\ldots,p$.  We will see later that the Laplace transform of $f(y)$ exists in a neighborhood of the origin and equals $1$ when $s = 0$, so the total integral of $f(y)$ is 1.  Therefore, $f(y)$ is a density function.  

The proof that the distribution of $Y$ has the required Laplace transform is obtained by straightforward term-by-term integration.  We obtain 
\begin{align}
\label{lt_gamma_0}
\E\bigg[\exp\bigg(&-\sum_{j=1}^p s_j Y_j\bigg)\bigg] \nonumber \\
&= \frac{C_0(\lambda;\beta)}{[\Gamma(\lambda)]^p} \, \int_{\bR_+^p} {}_0F_{p-1}(\lambda,\ldots,\lambda;y_1\cdots y_p) \prod_{j=1}^p y_j^{\lambda-1} e^{-(s_j+\beta_j) y_j} \dd y_j \nonumber \\
&= \frac{C_0(\lambda;\beta)}{[\Gamma(\lambda)]^p} \, \sum_{k=0}^\infty \frac{1}{k! \, [(\lambda)_k]^{p-1}} \prod_{j=1}^p \int_0^\infty y_j^{\lambda+k-1} e^{-(s_j+\beta_j) y_j} \dd y_j \nonumber \\
&= \frac{C_0(\lambda;\beta)}{[\Gamma(\lambda)]^p} \, \sum_{k=0}^\infty \frac{1}{k! \, [(\lambda)_k]^{p-1}} \prod_{j=1}^p \big[\Gamma(\lambda+k) \, (s_j+\beta_j)^{-(\lambda+k)}\big].
\end{align}
Writing $\Gamma(\lambda+k) = (\lambda)_k \Gamma(\lambda)$, we find that the Laplace transform \eqref{lt_gamma_0} equals
\begin{align}
\label{lt_gamma_1}
\E\bigg[\exp\bigg(-\sum_{j=1}^p s_j Y_j\bigg)\bigg] &= C_0(\lambda;\beta) \, \sum_{k=0}^\infty \frac{(\lambda)_k}{k!} \prod_{j=1}^p (s_j+\beta_j)^{-(\lambda+k)} \nonumber \\
&= C_0(\lambda;\beta) \, \prod_{j=1}^p (s_j+\beta_j)^{-\lambda} \cdot \sum_{k=0}^\infty \frac{(\lambda)_k}{k!} \prod_{j=1}^p (s_j+\beta_j)^{-k}.
\end{align}
With convergence for $|\prod_{j=1}^p (s_j+\beta_j)| > 1$, \eqref{lt_gamma_1} becomes
\begin{align}
\label{lt_gamma_2}
\E\bigg[\exp\bigg(-\sum_{j=1}^p s_j Y_j\bigg)\bigg] &= C_0(\lambda;\beta) \, \prod_{j=1}^p \big[(s_j+\beta_j)^{-\lambda} \cdot {}_1F_0\bigg(\lambda;\prod_{j=1}^p (s_j+\beta_j)^{-1}\bigg) \nonumber \\
&= C_0(\lambda;\beta) \, \prod_{j=1}^p (s_j+\beta_j)^{-\lambda} \cdot \bigg(1 - \prod_{j=1}^p (s_j+\beta_j)^{-1}\bigg)^{-\lambda} \nonumber \\
&= C_0(\lambda;\beta) \, \bigg(\prod_{j=1}^p (s_j+\beta_j) - 1\bigg)^{-\lambda}.
\end{align}
Setting $s = 0$ in \eqref{lt_gamma_2}, we obtain 
$$
C_0(\lambda;\beta) \, (\beta_\star - 1)^{-\lambda} = 1,
$$
which verifies \eqref{C0_constant}.  

Substituting into \eqref{lt_gamma_2} the formula for $C_0(\lambda;\beta)$, we obtain 
\begin{align}
\label{lt_gamma_3}
\E\bigg[\exp\bigg(-\sum_{j=1}^p s_j Y_j\bigg)\bigg] &= (\beta_\star - 1)^\lambda \cdot \bigg(\prod_{j=1}^p (s_j+\beta_j) - 1\bigg)^{-\lambda} \nonumber \\
&= \bigg(\frac{\prod_{j=1}^p (s_j+\beta_j) - 1}{\beta_\star - 1}\bigg)^{-\lambda}.
\end{align}
For each $T \in \mathcal{J}$, let $\bar{T} = \{1,\ldots,p\} \setminus T$ denote the complement of $T$.  Then we have 
\begin{align*}
\prod_{j=1}^p (s_j+\beta_j) &= \sum_{T \in \mathcal{J}} s^T \beta^{\bar{T}} \\
&= s^\emptyset \beta^{\bar{\emptyset}} + \sum_{T \in \mathcal{J}, T \neq \emptyset} s^T \beta^{\bar{T}} 
= \prod_{j=1}^p \beta_j + \sum_{T \in \mathcal{J}, T \neq \emptyset} s^T \beta^{\bar{T}}.
\end{align*}
Therefore 
$$
\prod_{j=1}^p (s_j+\beta_j) - 1 = \beta_\star - 1 + \sum_{T \in \mathcal{J}, T \neq \emptyset} s^T \beta^{\bar{T}},
$$
and it follows that \eqref{lt_gamma_3} equals
\begin{align*}
\E\bigg[\exp\bigg(-\sum_{j=1}^p s_j Y_j\bigg)\bigg] &= \bigg(\frac{\beta_\star - 1 + \sum_{T \in \mathcal{J}, T \neq \emptyset} s^T \beta^{\bar{T}}}{\beta_\star - 1}\bigg)^{-\lambda} \\
&= \bigg(1 + \sum_{T \in \mathcal{J}, T \neq \emptyset} \frac{\beta^{\bar{T}}}{\beta_\star - 1} s^T\big)^{-\lambda}.
\end{align*}
Now define the coefficient mapping, $c: \mathcal{J} \to \bR$, as in \eqref{coeff_function}.  Then we obtain 
$$
\E\bigg[\exp\bigg(-\sum_{j=1}^p s_j Y_j\bigg)\bigg] = \bigg(\sum_{T \in \mathcal{J}} c_T s^T\bigg)^{-\lambda},
$$
proving that the Laplace transform of $Y$ is of the form \eqref{lt_gamma}.
\end{proof}

\section{Proof of Proposition \ref{proposition_pmf_poisson}}
\label{appendix_C}

In this section, we provide a

\begin{proof}[Proof of Proposition \ref{proposition_pmf_poisson}]
An affine polynomial on $\bR^p$ is of the form 
\begin{align*}
P(s_1, \ldots, s_p) &= c_0 + \sum_{j=1}^p c_j s_j + \sum_{1 \le j < k \le p} c_{jk} s_j s_k + \ldots + c_{1\cdots p} s_1 \cdot \ldots \cdot s_p \\
&= \sum_{T \subseteq \mathcal{J}} c_T s^T.
\end{align*}
Let $Y = (Y_1, \ldots ,Y_p)$ be a random vector with m.g.f.
\begin{align}
\label{mgf_mpoisson}
\E(e^{s_1Y_1 + \ldots + s_pY_p}) &= \exp\big(P(e^{s_1}, \ldots ,e^{s_p})\big) \nonumber \\
&= \exp\bigg(c_0 + \sum_{j=1}^p c_j e^{s_j} + \sum_{1 \le j < k \le p} c_{jk} e^{s_j+s_k} + \ldots + c_{1\cdots p} e^{s_1+ \ldots +s_p}\bigg) \nonumber \\
&\equiv \exp\bigg(\sum_{T \subseteq \mathcal{J}} c_T e^{s_T}\bigg).
\end{align}
Evaluating both sides of \eqref{mgf_mpoisson} at $s = 0$, we obtain 
$$
\sum_{T \subseteq \mathcal{J}} c_T = 0;
$$
hence 
\begin{equation}
\label{c_0}
c_\emptyset = - \sum_{T \subseteq \mathcal{J}, T \neq \emptyset} c_T.
\end{equation}
Substituting \eqref{c_0} into the m.g.f. \eqref{mgf_mpoisson}, we obtain 
\begin{equation}
\label{poisson_mgf_pd}
\E(e^{s_1 Y_1 + \ldots + s_p Y_p}) = \exp\bigg(\sum_{T \subseteq \mathcal{J}, T \neq \emptyset} c_T (e^{s_T} - 1)\bigg).
\end{equation}
Set $s_2 = \ldots = s_p = 0$ in \eqref{poisson_mgf_pd} and obtain 
\begin{align*}
\E(e^{s_1 Y_1}) &= \exp\big(c_1 (e^{s_1} - 1) + c_{12} (e^{s_1} - 1) + \ldots + c_{1\cdots p} (e^{s_1} - 1)\big) \\
&= \exp\big(c_{1\bullet} (e^{s_1} - 1)\big),
\end{align*}
where $c_{1\bullet} = c_1 + c_{12} + \ldots + c_{1\cdots p}$; therefore, $Y_1$ is Poisson-distributed with parameter $c_{1\bullet}$.  In general, $Y_j$ is Poisson-distributed with parameter 
$$
c_{j\bullet} = \sum_{T \in \mathcal{J}: \, j \in T} c_T,
$$
$j=1,\ldots,p$.  Replace each $e^{s_j}$ by $s_j$ in \eqref{poisson_mgf_pd} to obtain 
\begin{equation}
\label{mgf_mpoisson_1}
\E(s_1^{Y_1} \cdots s_p^{Y_p}) = \exp\bigg(\sum_{T \subseteq \mathcal{J}, T \neq \emptyset} c_T (s^T - 1)\bigg).
\end{equation}
Since $c_T\neq 0$, \eqref{mgf_mpoisson_1} becomes
\begin{align}
\label{mgf_mpoisson_2}
\E(s_1^{Y_1} \cdots s_p^{Y_p}) &= \prod_{j=1}^p e^{c_j (s_j - 1)} \cdot \prod_{1\le j<k\le p} e^{c_{jk} (s_j s_k - 1)} \cdot \ldots \cdot e^{c_{1\cdots p} (s_1 \cdots s_p - 1)}.
\end{align}
Now apply the expansion, 
$$
e^{c (s - 1)} = \sum_{j=0}^\infty \frac{e^{-c} c^j}{j!} s^j,
$$
to each term in \eqref{mgf_mpoisson_2}.  Then, we obtain 
\begin{align*}
\E(s_1^{Y_1} \cdots s_p^{Y_p}) &= \sum_{j_1,\ldots ,j_p=0}^\infty \cdots  \sum_{j_{1\cdots p}=0}^\infty \bigg(\prod_{l=1}^p \frac{e^{-c_l} c_l^{j_l} s_l^{j_l}}{j_l!}\bigg) \\
& \qquad\qquad \cdot \bigg(\prod_{1 \le l < m \le p} \frac{ e^{-c_{lm}} c_{lm}^{j_{lm}} s_l^{j_{lm}}  s_m^{j_{lm}} }{ j_{lm}! }\bigg)  \cdots \frac{ e^{-c_{1\cdots p}} c_{1 \cdots p}^{j_{1\cdots p}} s_1^{j_{1 \cdots p}} \cdots s_p^{j_{1\cdots p}}}{j_{1 \cdots p}!}.
\end{align*}

For each non-empty $T \in \mathcal{J}$ with $T = \{l_1,\ldots,l_r\}$ with $l_1 < \cdots < l_r$, define $j_T = j_{l_1 \cdots l_r}$. For each $k=1,\ldots,p$, also define 
$$
j_{k\bullet} = \sum_{T \in \mathcal{J} : \, k \in T} j_T.
$$
Then, we obtain the joint density function of $(Y_1, \ldots ,Y_p)$ as 
$$
P(Y_1=l_1, \ldots ,Y_p=l_p) = \sum_{\substack{j_{1\bullet}=l_1 \\ \cdots  \\ j_{p\bullet}=l_p}} \bigg(\prod_{l=1}^p \frac{e^{-c_l} c_l^{j_l}}{j_l!}\bigg) \cdot \bigg(\prod_{1 \le l < m \le p} \frac{e^{-c_{lm}} c_{lm}^{j_{lm}}}{ j_{lm}! }\bigg) \cdots \bigg(\frac{ e^{-c_{1\cdots p}} c_{1 \cdots p}^{j_{1 \cdots p}} }{ j_{1 \cdots p}! }\bigg).
$$
The proof now is complete.  
\end{proof}

\end{document}